\newtheorem{theorem}{Theorem}[section]
\newtheorem{lemma}{Lemma}[section]
\newcommand{\vague}{\stackrel{v}{\longrightarrow}}
\newcommand{\weak}{\stackrel{w}{\longrightarrow}}
\newcommand{\prob}{\stackrel{P}{\longrightarrow}}
\newcommand{\one}{{\bf 1}}
\newcommand{\reals}{{\mathbb R}}
\newcommand{\bbr}{\reals}
\newcommand{\vep}{\varepsilon}
\newcommand{\bbrdcomp}{\overline{\bbr^d}}
\newcommand{\nchoosek}{\left(\begin{array}cn\\k\end{array}\right)}
\def\Var{{\rm Var}}
\numberwithin{equation}{section}
\begin{document}

\title[Truncated heavy tails]{Effect of truncation on large deviations for heavy-tailed random vectors}
\author[A. Chakrabarty]{Arijit Chakrabarty}
\address{Statistics and Mathematics Unit,
Indian Statistical Institute,
7 S.J.S. Sansanwal Marg,
New Delhi 110016, India}
\email{arijit@isid.ac.in}

\begin{abstract}
This paper studies the effect of truncation on the large deviations behavior of the partial sum of a triangular array coming from a truncated power law model. Each row of the triangular array consists of i.i.d. random vectors, whose distribution matches a power law on a ball of radius going to infinity, and outside that it has a light-tailed modification. The random vectors are assumed to be
$\bbr^d$-valued. It turns out that there are two regimes depending
on the growth rate of the truncating threshold, so that in one
regime, much of the heavy tailedness is retained, while in the
other regime, the same is lost.
\end{abstract}
\subjclass{60F10} \keywords{ heavy
tails, truncation, regular variation,  large deviation\vspace{.5ex}}
\thanks{Research partly supported by the NSF grant ``Graduate and Postdoctoral Training in Probability and its Applications'' at
Cornell University.}
\maketitle

\section{Introduction} \label{sec:intro}

This paper answers the question of the extent to which truncated
heavy-tailed random vectors behave like heavy-tailed random
vectors that are not truncated, from the point of view of large
deviations behavior. There are lot of situations where a power law
is a good fit, and at the same time the quantity of interest is
physically bounded above. As a natural model for such phenomena, we consider a
truncated heavy-tailed distribution - a distribution that matches
a power law on a ball with ``large'' radius, centered at the
origin, and outside that the tail decays significantly faster or
simply vanishes. It is obvious that if the truncating threshold is
fixed, then as the sample size goes to infinity, any effect of the
heavy-tailed distribution that we started with will eventually
wash out. Thus, any interesting analysis of such a system should
necessarily let the truncating threshold go to infinity along with
the sample size. Answering the question posed above demands a
systematic study of the relation between the growth rate of the
truncating threshold and the asymptotic properties of the
truncated heavy-tailed model which we now proceed to define
formally. This question has previously been addressed in the
literature from a different angle, that of the central limit
theorem; see \cite{chakrabarty:samorodnitsky:2009} and
\cite{chakrabarty:2010}.

A random variable $H$ that takes values in $\bbr^d$ is  heavy-tailed or has a power law, if there is a non-null Radon measure $\mu$ on $\bbr^d\setminus\{0\}$ so that there is a sequence $a_n$ going to infinity satisfying
\begin{equation}\label{ld.defn}
nP(a_n^{-1}H\in\cdot)\vague\mu(\cdot)
\end{equation}
on $\bbrdcomp\setminus\{0\}$. Here $\bbrdcomp$ is a compact set obtained by adding to $\bbr^d$ a ball of infinite radius centered at origin and the measure $\mu$ is extended to the former by $\mu(\bbrdcomp\setminus\bbr^d)=0$. It can be shown that \eqref{ld.defn} implies that there exists $\alpha>0$ such that for any Borel set $A\subset B$ and $c>0$,
$\mu(cA)=c^{-\alpha}\mu(A)\,.$
This is the definition of regularly varying tail with index $\alpha$ used by \cite{resnick:1987} and \cite{hult:lindskog:mikosch:samorodnitsky:2005}. Since the truncating threshold changes with the sample size, we have a triangular array. The $n$-th row of the array, comprises $n$ i.i.d. random vectors denoted by $X_{n1},\ldots,X_{nn}$. For $1\le j\le n$, the observation $X_{nj}$, whose distribution should be thought of as the truncation of a power tail, is defined by
\begin{equation}\label{e:the.model}
X_{nj}:=H_j\one\left(\|H_j\|\le  M_n\right)
+\frac{H_j}{\|H_j\|}(M_n+L_j)\one\left(\|H_j\|>M_n\right)\,.
\end{equation}
Here $(M_n)$ is a sequence of numbers going to infinity, $H_1,H_2,\ldots$ are i.i.d.
copies of $H$ that satisfies \eqref{ld.defn}, and $(L,L_1,L_2,\ldots)$ is a sequence of i.i.d. nonnegative random variables. We assume that the families $(H,H_1,H_2,\ldots)$ and $(L,L_1,L_2,\ldots)$ are  independent. In \eqref{e:the.model}, $M_n$ denotes the level of truncation. The distribution of the random variable $L$ represents the modification of the model \eqref{e:the.model} outside the ball of radius $M_n$. We chose to formulate the results in such a way that all of them will be true in the case when $L$ is identically zero. However, almost all the results are true under milder hypothesis like existence of some exponential moment. The assumption on $L$ will vary from result to result and will be stated as we go along.
We would like to mention at this point that the model \eqref{e:the.model} makes the modification outside the ball of radius $M_n$ radially identical, an assumption made for the sake of simplicity. An interesting extension, which we leave aside for future investigation, would be to multiply $L_j$ by a function of $H_j/\|H_j\|$.

The motivation of this paper is based on the fact that the notion of heavy-tail as defined in \eqref{ld.defn} is closely related to large deviation results for random walks with heavy-tailed step size. Such studies in one dimension date back to \cite{heyde:1968}, \cite{nagaev:1969b}, \cite{nagaev:1969a}, \cite{nagaev:1979} and \cite{cline:hsing:1991}, among others; a survey on this topic can be found in Section 8.6 in \cite{embrechts:kluppelberg:mikosch:1997} and \cite{mikosch:nagaev:1998}. More recently, the functional version of large deviation principles for heavy-tailed $\bbr^d$ valued random variables has been taken up by \cite{hult:lindskog:mikosch:samorodnitsky:2005}. There, it is shown among other things, that if $H_1,H_2,\ldots$ are i.i.d. copies of $H$ that satisfies \eqref{ld.defn}, then
\begin{equation}\label{intro.ld.ht}
\frac{P\left(\lambda_n^{-1}\sum_{j=1}^nH_j\in\cdot\right)}{nP(\|H\|>\lambda_n)}\vague\frac{\mu(\cdot)}{\mu(B_1^c)}\,,
\end{equation}
where $\lambda_n$ is a sequence satisfying $\lambda_n^{-1}\sum_{j=1}^nH_j\prob0$ and in addition
\begin{eqnarray*}
\lambda_n&\gg&\sqrt{n^{1+\gamma}}\mbox{ for some }\gamma>0,\mbox{ if }\alpha=2\\
\lambda_n&\gg&\sqrt{n\log n},\mbox{ if }\alpha>2\,,
\end{eqnarray*}
and for $r\ge0$, $B_r:=\{x\in\bbr^d:\|x\|\le r\}$ denotes the closed ball of radius $r$ centered at the origin. (In the above equation, ``$v_n\gg u_n$'' means that
$$
\lim_{n\to\infty}\frac{u_n}{v_n}=0\,.
$$
Throughout the paper, ``$\gg$'' will be used as a shorthand for the above, and ``$\ll$'' for the obvious opposite.)
Motivated by this, we ask the question ``When does the model \eqref{e:the.model} retain the heavy-tailedness so that the behavior is similar to that in \eqref{intro.ld.ht}?''
The conclusion of \cite{chakrabarty:samorodnitsky:2009} was that the central limit behavior was completely determined by the truncation regime defined as follows: the tails in
the model \eqref{e:the.model} are called
\begin{equation} \label{e:regimes}
\begin{array}{ll}
\text{truncated softly} & \text{if} \ \lim_{n\to\infty}nP\left( \|
H\|>M_n\right) = 0\,,\\
\text{truncated hard} & \text{if} \ \lim_{n\to\infty}nP\left( \|
H\|>M_n\right) = \infty\,.
\end{array}
\end{equation}
Our approach to answering the above mentioned question lies in studying the large deviation behavior of the partial sum in both regimes - soft and hard  truncation, as defined in \eqref{e:regimes}. Of course, there is an intermediate regime where the limit exists, and is finite and positive. Unfortunately, the author has not been able to solve the large deviations for that regime. The above mentioned reference studies the central limit behavior for that regime.

The paper is organized as follows.  The large deviation principles for
the truncated heavy-tailed random variables is studied in the soft
truncation and hard truncation regimes, as defined in
\eqref{e:regimes}, in Sections \ref{sec:ld.st} and \ref{sec:ld.ht}
respectively. The conclusions of the paper are summarized in
Section \ref{sec:con}.

\section{Large deviations: the soft truncation regime}\label{sec:ld.st} In this section, we study the behavior of the large deviation probabilities for sums of truncated heavy-tailed random variables, when the truncation is soft. Let $H$ be a $\bbr^d$ valued random variable satisfying \eqref{ld.defn} for some sequence $a_n$ going to infinity and a non-null Radon measure $\mu$ on $\bbrdcomp$ with $\mu(\bbrdcomp\setminus\bbr^d)=0$. 
It is well known that for such a $H$, $P(\|H\|>\cdot)$ is regularly varying with index $-\alpha$ for some $\alpha>0$.
We further assume that if $\alpha=1$ then $H$ has a symmetric distribution and if $\alpha>1$ then $E(H)=0$.
The triangular array $\{X_{nj}:1\le j\le n\}$ is as defined in \eqref{e:the.model}, where $H_1,H_2,\ldots$ are i.i.d. copies of $H$, $M_n$ is a sequence of positive numbers going to $\infty$, $L,L_1,L_2,\ldots$ are i.i.d. $[0,\infty)$ valued random variables independent of $H,H_1,H_2,\ldots$ and $\|\cdot\|$ denotes the $L^2$ norm on $\bbr^d$, {\it i.e.},  for $x=(x_1,\ldots,x_d)\in\bbr^d$,
\begin{equation}\label{ld.norm}
\|x\|:=\left(\sum_{j=1}^dx_j^2\right)^{1/2}\,.
\end{equation}
We shall study large deviations for the row sum $S_n$, defined by
$$
S_n:=\sum_{j=1}^nX_{nj}\,.
$$

For this section, we assume that  $M_n$ goes to $\infty$ fast enough so that
$$\lim_{n\to\infty}nP(\|H\|>M_n)=0\,,$$
which is clearly equivalent to
$$
M_n\gg a_n\,,
$$
where $a_n$ is that satisfying \eqref{ld.defn}.
We assume in addition that
\begin{equation}\label{alpha2}
\lim_{n\to\infty}M_n/\sqrt{n^{1+\gamma}}=\infty\mbox{ for some }\gamma>0,\mbox{ if }\alpha=2\,,
\end{equation}
and
\begin{equation}\label{alphag2}
\lim_{n\to\infty}M_n/\sqrt{n\log M_n}=\infty,\mbox{ if }\alpha>2\,.
\end{equation}
Define
\begin{equation}\label{ld.eq5}
b_n:=\left\{
\begin{array}{ll}
\inf\{x:P(\|H\|>x)\le n^{-1}\},&\alpha<2\\
\sqrt{n^{1+\gamma}},&\alpha=2\\
\sqrt{n\log n},&\alpha>2\,,
\end{array}\right.
\end{equation}
where $\gamma$ is same as that in \eqref{alpha2}. Clearly, $1\ll b_n\ll M_n$ and ${\mathcal L}(b_n^{-1}S_n)$ is a tight sequence. The following result, which is an easy consequence of Lemma 2.1 in \cite{hult:lindskog:mikosch:samorodnitsky:2005}, describes the large deviation behavior of $\lambda_n^{-1}S_n$ where $b_n\ll\lambda_n\ll M_n$.

\begin{theorem}\label{ld.t1} In the soft truncation regime, if $\lambda_n$ is any sequence of positive numbers satisfying $b_n\ll\lambda_n\ll M_n$,
then, as $n\longrightarrow\infty$,
$$\frac{P(\lambda_n^{-1}S_n\in\cdot)}{nP(\|H\|>\lambda_n)}\vague\frac{\mu(\cdot)}{\mu(B_1^c)}$$
on $\bbrdcomp\setminus\{0\}$. Recall that for all $r\ge0$, $B_r$ denotes the closed ball of radius $r$, centered at the origin.
\end{theorem}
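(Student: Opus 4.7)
The plan is to reduce the statement to the known large-deviation result \eqref{intro.ld.ht} for the untruncated sum by a straightforward coupling. Write $T_n:=\sum_{j=1}^nH_j$. From the definition \eqref{e:the.model}, $X_{nj}=H_j$ on the event $\{\|H_j\|\le M_n\}$, so $\{S_n\neq T_n\}\subset\bigcup_{j=1}^n\{\|H_j\|>M_n\}$ and a union bound gives
$$P(S_n\neq T_n)\le nP(\|H\|>M_n).$$

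First, I would invoke the result \eqref{intro.ld.ht} directly for $T_n$. The hypotheses required there concern $\lambda_n$ relative to the ``typical size'' of $T_n$: namely $\lambda_n\gg a_n$ when $\alpha<2$, $\lambda_n\gg\sqrt{n^{1+\gamma}}$ when $\alpha=2$, and $\lambda_n\gg\sqrt{n\log n}$ when $\alpha>2$, together with the symmetry/centering assumption on $H$ which ensures that $\lambda_n^{-1}T_n\prob 0$. These three conditions are precisely encoded in the hypothesis $b_n\ll\lambda_n$ via the piecewise definition \eqref{ld.eq5} of $b_n$. Consequently,
$$\frac{P\left(\lambda_n^{-1}T_n\in\cdot\right)}{nP(\|H\|>\lambda_n)}\vague\frac{\mu(\cdot)}{\mu(B_1^c)}$$
on $\bbrdcomp\setminus\{0\}$.

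Next, I would show that $S_n$ and $T_n$ are indistinguishable at this scale. For any Borel set $A$ in $\bbrdcomp\setminus\{0\}$,
$$\bigl|P(\lambda_n^{-1}S_n\in A)-P(\lambda_n^{-1}T_n\in A)\bigr|\le P(S_n\neq T_n)\le nP(\|H\|>M_n),$$
so dividing by $nP(\|H\|>\lambda_n)$ reduces everything to proving $P(\|H\|>M_n)/P(\|H\|>\lambda_n)\to 0$. Since $P(\|H\|>\cdot)$ is regularly varying with index $-\alpha<0$, both $M_n,\lambda_n\to\infty$, and $M_n/\lambda_n\to\infty$, Potter's bounds give, for any fixed $\delta\in(0,\alpha)$ and all sufficiently large $n$,
$$\frac{P(\|H\|>M_n)}{P(\|H\|>\lambda_n)}\le 2\left(\frac{M_n}{\lambda_n}\right)^{-\alpha+\delta}\longrightarrow 0.$$

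Combining the two displays yields the claimed vague convergence. No step is genuinely hard; the only care needed is matching the hypotheses of \eqref{intro.ld.ht} in each of the three ranges of $\alpha$, which is exactly why $b_n$ is defined case by case in \eqref{ld.eq5}. The soft-truncation hypothesis itself (namely $nP(\|H\|>M_n)\to 0$) is not even used directly in the coupling estimate; what makes the argument work is the stronger statement $M_n\gg\lambda_n$ coming from $\lambda_n\ll M_n$ combined with regular variation of the tail.
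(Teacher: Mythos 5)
Your proposal is correct and follows essentially the same route as the paper: apply the Hult--Lindskog--Mikosch--Samorodnitsky large-deviation result (Lemma 2.1 there, i.e. your \eqref{intro.ld.ht}) to the untruncated sum $\sum_j H_j$, then use the total-variation bound $P(S_n\neq\sum_j H_j)\le nP(\|H\|>M_n)=o(nP(\|H\|>\lambda_n))$ coming from $\lambda_n\ll M_n$ and regular variation of the tail. The only cosmetic difference is that you spell out the Potter-bounds step where the paper simply cites $\lambda_n\ll M_n$.
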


\begin{proof}
Fix a sequence $\lambda_n$ satisfying the hypotheses. The assumption
that $\lambda_n\gg b_n$ implies that $\lambda_n^{-1}S_n\prob0\,.$ By
Lemma 2.1 in \cite{hult:lindskog:mikosch:samorodnitsky:2005}, it
follows that
$$
\frac{P\left(\lambda_n^{-1}\sum_{j=1}^n H_j\in\cdot\right)}{nP(\|H\|>\lambda_n)}\vague\frac{\mu(\cdot)}{\mu(B_1^c)}
$$
on $\bbrdcomp\setminus\{0\}$. Note that
\begin{eqnarray*}
&&\sup_{A\subset\bbr^d}\left|P(\lambda_n^{-1}S_n\in A)-P\left(\lambda_n^{-1}\sum_{j=1}^nH_j\in A\right)\right|\\
&\le&nP(\|H\|>M_n)\\
&=&o(nP(\|H\|>\lambda_n))\,,
\end{eqnarray*}
the last equality following from the assumption that $\lambda_n\ll M_n$. This completes the proof.
\end{proof}

Before stating the next result we need some preliminaries. Define
\begin{equation}\label{defS}
{\mathcal S}:=\{x\in\bbr^d:\|x\|=1\}\,,
\end{equation}
and a probability measure $\sigma$ on $\mathcal S$ by
\begin{equation}\label{defsigma}
\sigma(A):=\frac1{\mu(B_1^c)}\mu\left(\left\{x\in \bbr^d:\|x\|\ge1,\frac x{\|x\|}\in A\right\}\right)\,.
\end{equation}
Notice that $\sigma$ is the measure satisfying
$$
\frac{\mu((r,\infty)\times A)}{\mu((1,\infty)\times {\mathcal S})}=r^{-\alpha}\sigma(A),\,r>0,A\subset{\mathcal S}\,,
$$
which is a consequence of the scaling property satisfied by $\mu$, mentioned below \eqref{ld.defn}.
It is easy to see that \eqref{ld.defn} implies
\begin{equation}\label{ld.spectral}
{P\left(\frac H{\|H\|}\in\cdot\bigg|\|H\|>t\right)}\weak\sigma(\cdot)
\end{equation}
as $t\longrightarrow\infty$, weakly on $\mathcal S$.

For $k\ge1$, we define a measure $\nu^{(k)}$ on $\bbr^d\setminus B_{k-1}$ by
$$\nu^{(k)}(A):=\int\cdots\int1\left(\sum_{j=1}^kx_j\in A\right)\nu(dx_1)\ldots\nu(dx_k)\,,$$
where
\begin{equation}\label{ld.t2.eq1}
\nu(A):=\frac{\mu(A\cap B_1)}{\mu(B_1^c)}+\sigma(A\cap{\mathcal S})\,.
\end{equation}
Extend $\nu^{(k)}$ to $\bbrdcomp\setminus B_{k-1}$ by putting $\nu^{(k)}(\bbrdcomp\setminus\bbr^d)=0$. Let us record some properties of this measure. First, notice that $\nu^{(k)}$ is a Radon measure, that is, $\nu^{(k)}(B_r^c)<\infty$ for all $r>k-1$, which follows from the fact that $\nu$ puts finite measure on the set $B_{r-k+1}^c$, and the observation that
\begin{eqnarray*}
&&\nu^{(k)}(B_r^c)\\
&=&\int_{\{1\ge\|x_1\|>r-k+1\}}\ldots\int_{\{1\ge\|x_k\|>r-k+1\}}\one\left(\sum_{j=1}^kx_j\in
B_r^c\right)\nu(dx_1)\ldots\\
&&\,\,\,\,\ldots\nu(dx_k)\,,
\end{eqnarray*}
the equality following because $\nu(B_1^c)=0$. The next observation is that
$$
\nu^{(k)}(B_k^c)=0\,,
$$
which follows trivially from the definition. Finally, observe that
$$
\nu^{(1)}=\nu\,.
$$

The next result, Theorem \ref{ld.t2}, describes the large deviation behavior of $M_n^{-1}S_n$. The reason we call this a large deviation result is the following. This result, for example, shows that for all $r\in(k-1,k)$ such that $\nu^{(k)}(\{x\in\bbr^d:\|x\|=r\})=0$ (which is in fact true for all but countably many $r$'s in $(k-1,k)$), there is some $C_r\in(0,\infty)$ so that, as $n\longrightarrow\infty$,
$$P(\|S_n\|>rM_n)\sim C_r\{nP(\|H\|>M_n)\}^k\,.$$

\begin{theorem}\label{ld.t2} Suppose $k\ge1$ and that
\begin{equation}\label{ld.t2.eq2}
P(L>x)=o(P(\|H\|>x)^{k-1})
\end{equation}
as $x\longrightarrow\infty$. Then, in the soft truncation regime, as $n\longrightarrow\infty$,
 \begin{eqnarray*}
\frac{P(M_n^{-1}S_n\in\cdot)}{\{nP(\|H\|>M_n)\}^k}
 &\stackrel v\longrightarrow&\frac1{k!}\nu^{(k)}(\cdot)
 \end{eqnarray*}
 on $\bbrdcomp\setminus B_{k-1}$.
\end{theorem}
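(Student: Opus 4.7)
The plan is a sparsification argument: identify the main contribution to $\{M_n^{-1}S_n\in A\}$ (for $A$ bounded away from $B_{k-1}$) with the event that exactly $k$ of the $X_{nj}$'s are ``large,'' and show that the normalized joint law of these $k$ summands approaches $\nu^{\otimes k}$. Write $p_n:=P(\|H\|>M_n)$, so $np_n\to 0$.

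\emph{One-step vague limit.} First I would show
$$\frac{P(M_n^{-1}X_{n1}\in\cdot)}{p_n}\vague\nu(\cdot)\text{ on }\bbrdcomp\setminus\{0\}.$$
Split on $\{\|H_1\|\le M_n\}$ versus $\{\|H_1\|>M_n\}$: on the former, regular variation of $\|H\|$ gives $P(M_n^{-1}H_1\in A,\|H_1\|\le M_n)/p_n\to\mu(A\cap B_1)/\mu(B_1^c)$ for any $\mu$-continuity set $A$ bounded away from $0$; on the latter, $M_n^{-1}X_{n1}=V_1(1+L_1/M_n)$ with $V_1=H_1/\|H_1\|$, and since $L_1/M_n\to 0$ a.s.\ while the conditional law of $V_1$ given $\|H_1\|>M_n$ converges weakly to $\sigma$ by \eqref{ld.spectral}, this piece contributes $\sigma(A\cap\mathcal{S})$. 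Adding yields $\nu$.

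\emph{Sparsification and main term.} For $A$ a $\nu^{(k)}$-continuity set at distance $\epsilon>0$ from $B_{k-1}$, fix $\delta\in(0,\epsilon)$ and let $N_n^\delta:=\#\{j\le n:\|X_{nj}\|>\delta M_n\}$, which is binomial with parameters $(n,q_n^\delta)$ where $q_n^\delta\sim p_n\delta^{-\alpha}$. Decompose $P(M_n^{-1}S_n\in A)$ according to $N_n^\delta$. On $\{N_n^\delta=k\}$ write $S_n=S_n^L+S_n^S$ with $S_n^L$ the sum of the $k$ large terms and $S_n^S=\sum_jH_j\one(\|H_j\|\le\delta M_n)$ the bulk; Theorem~\ref{ld.t1} applied to $S_n^S$ (the effective truncation $\delta M_n$ is still soft and well above $b_n$) gives $M_n^{-1}S_n^S\to 0$ in probability. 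Combining with the first step,
$$P(M_n^{-1}S_n\in A,\,N_n^\delta=k)\sim\binom{n}{k}p_n^k\int_{(B_\delta^c)^k}\one\Bigl(\sum_{i=1}^kx_i\in A\Bigr)\prod_{i=1}^k\nu(dx_i).$$
Each $\|x_i\|\le 1$ combined with $\|\sum_ix_i\|\ge k-1+\epsilon$ forces $\|x_i\|\ge\epsilon$, so for $\delta<\epsilon$ the $B_\delta^c$-restriction is superfluous and the integral equals $\nu^{(k)}(A)$, producing the main term $(k!)^{-1}(np_n)^k\nu^{(k)}(A)$.

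\emph{Error bounds and main obstacle.} It remains to show $P(M_n^{-1}S_n\in A,\,N_n^\delta=j)=o((np_n)^k)$ for $j\ne k$. For $j\ge k+1$, $P(N_n^\delta\ge k+1)=O((np_n)^{k+1})$. For $j\le k-1$, the triangle inequality gives $\|M_n^{-1}S_n\|\le j+\max_{i:\|H_i\|>M_n}L_i/M_n+\|M_n^{-1}S_n^S\|$, so $\|M_n^{-1}S_n\|>k-1+\epsilon$ forces either some exceedance $L_i$ to exceed $cM_n$---whose probability is at most $np_nP(L>cM_n)=o(np_n\cdot p_n^{k-1})=o((np_n)^k)$ by \eqref{ld.t2.eq2} and regular variation of $\|H\|$ (the $k=1$ case is handled directly via $L/M_n\to 0$)---or $\|S_n^S\|>c'M_n$. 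The hard part is bounding this last event: Theorem~\ref{ld.t1} only delivers the rate $O(np_n)$, which is insufficient once $k\ge 2$, so a quantitative exponential concentration inequality (Bennett or Fuk--Nagaev type) for the bounded sum $S_n^S$ is required. The hypotheses $M_n\gg b_n$, \eqref{alpha2} and \eqref{alphag2} make the relevant exponent $M_n^{\min(\alpha,2)}/n$ diverge fast enough to beat any polynomial factor $(np_n)^k$, completing the argument.
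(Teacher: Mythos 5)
Your overall strategy matches the paper's: a one-step vague limit $P(M_n^{-1}X_{n1}\in\cdot)/P(\|H\|>M_n)\vague\nu$ (Lemma~\ref{ld.l11}), a sparsification identifying the main contribution with exactly $k$ large summands, and an exponential concentration bound to kill the bulk. The one-step limit and the main-term computation under $N_n^\delta=k$ are essentially correct, as is the bound for $j\ge k+1$.

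The genuine gap is in the concentration step for the bulk, and it is not a detail. You truncate the bulk at $\delta M_n$ with $\delta$ a fixed constant, and then invoke a Prokhorov/Bennett-type bound for a sum of variables bounded by $C=\delta M_n$ at scale $\lambda\asymp\epsilon M_n$. All such inequalities give an exponent proportional to $(\lambda/C)\,\sinh^{-1}(C\lambda/\Var)\asymp(\epsilon/\delta)\log(M_n^2/\Var)$; the prefactor $\lambda/C=\epsilon/\delta$ is a \emph{constant}, so the resulting bound is only polynomial of the form $(M_n^2/\Var)^{-c}$ with $c=c(\epsilon,\delta)$. For $\alpha<2$ one has $M_n^2/\Var\asymp 1/(nP(\|H\|>M_n))$ so this is $(nP(\|H\|>M_n))^c$ and can be made $o((nP(\|H\|>M_n))^k)$ by taking $\delta$ small enough; but for $\alpha\ge2$ one has $M_n^2/\Var\asymp M_n^2/n$, and under \eqref{alphag2} this ratio may grow only like $\log M_n$, so $(n/M_n^2)^c\asymp(\log M_n)^{-c}$ is not $o((nP(\|H\|>M_n))^k)$ for any fixed $c$. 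Your stated exponent $M_n^{\min(\alpha,2)}/n$ is what a Gaussian tail bound would yield, but that form is not available once $C\lambda\gg\Var$, which is exactly the regime here.

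The paper gets around this by introducing a \emph{second} truncation level $z_n$ with $b_n\ll z_n\ll M_n$, calibrated by \eqref{ld.t2.eq3}--\eqref{ld.t2.eq6}. The event ``more than $k$ of the $X_{nj}$ exceed $z_n$'' ($F_n$) is disposed of separately with probability $O((nP(\|H\|>z_n))^{k+1})\ll(nP(\|H\|>M_n))^k$, while the bulk is truncated at $z_n$ so that the Prokhorov prefactor $M_n/z_n\to\infty$ (indeed $\gg\log M_n$), which makes the exponent superpolynomial. This two-level split (thresholds $z_n$ and $M_n$ playing distinct roles) is essential for $\alpha\ge2$ and is absent from your argument. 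Repairing your proof requires replacing the single threshold $\delta M_n$ by such an intermediate $z_n$, at which point one must also verify that a $z_n$ with the required sandwiching exists, which is precisely where the hypotheses \eqref{alpha2} and \eqref{alphag2} are used.
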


Before going to the proof, let us closely inspect the statement of the above result. Fix $k\ge1$. Since $\nu^{(k)}$ does not charge anything outside $B_k$ and the vague convergence happens on $\bbrdcomp\setminus B_{k-1}$, assume that
\begin{equation}\label{eq.new2}
A\subset B_k\setminus B_{k-1+\vep}\,,
\end{equation}
for some $\vep>0$. All that Theorem \ref{ld.t2} says is
$$
\frac1{k!}\nu^{(k)}(int(A))\le\liminf_{n\to\infty}\frac{P(M_n^{-1}S_n\in A)}{\{nP(\|H\|>M_n)\}^k}\le
$$
$$
\limsup_{n\to\infty}\frac{P(M_n^{-1}S_n\in A)}{\{nP(\|H\|>M_n)\}^k}\le\frac1{k!}\nu^{(k)}(cl(A))\,,
$$
where $int(\cdot)$ and $cl(\cdot)$ denote the interior and the closure of a set respectively.

The proof of Theorem \ref{ld.t2} is based on the idea that for $M_n^{-1}S_n$ to belong to a set $A$ satisfying \eqref{eq.new2}, it is ``necessary and sufficient'' that $M_n^{-1}\sum_{u=1}^kX_{nj_u}$ belongs to $A$ for at least one tuple $1\le j_1<\ldots<j_k\le n$,
where $X_{nj}$'s are as defined in \eqref{e:the.model}. This idea is similar to the idea in the proof of Lemma 2.1 in \cite{hult:lindskog:mikosch:samorodnitsky:2005}, that $S_n$ is large ``if and only if'' exactly one of the summands is large. The above heuristic statement is equivalent to
\begin{eqnarray}
&&P(M_n^{-1}S_n\in A)\nonumber\\
&\sim&P\left(\bigcup_{1\le j_1<\ldots<j_k\le n}\left\{M_n^{-1}\sum_{u=1}^kX_{nj_u}\in A\right\}\right)\nonumber\\
&\sim&\nchoosek P\left(M_n^{-1}\sum_{j=1}^kX_{nj}\in A\right)\nonumber\\
&=&\nchoosek \int\ldots\int\one\left(\sum_{j=1}^kx_j\in A\right)P(M_n^{-1}X_{n1}\in dx_1)\label{eq.new}\\
&&\,\,\,\,\,\,\,\ldots P(M_n^{-1}X_{nk}\in dx_k)\,.\nonumber
\end{eqnarray}
Again heuristically,
$$
P(M_n^{-1}X_{n1}\in dx)\sim nP(\|H\|>M_n)\nu(dx)\,,
$$
a formal statement of which is precisely the content of Lemma \ref{ld.l11} below. Using this, it can be argued that
\begin{eqnarray*}
&&\int\ldots\int\one\left(\sum_{j=1}^kx_j\in A\right)P(M_n^{-1}X_{n1}\in dx_1)\ldots P(M_n^{-1}X_{nk}\in dx_k)\\
&\sim&\int\ldots\int\one\left(\sum_{j=1}^kx_j\in A\right)\nu(dx_1)\ldots\nu(dx_k)\\
&=&\nu^{(k)}(A)\,.
\end{eqnarray*}
The above, in view of \eqref{eq.new}, shows the statement of Theorem \ref{ld.t2}. These ideas, in fact, constitute the crux of the rigorous proof. For the latter, we shall need the following
lemmas.

 \begin{lemma}\label{ld.l11}As $t\longrightarrow\infty$,
 $$\frac{P(X^t/t\in\cdot)}{P(\|H\|>t)}\stackrel v\longrightarrow\nu(\cdot)$$
 on $\bbrdcomp\setminus\{0\}$, where, for $t>0$,
 $$X^t:=H\one\left({\|H\|\le t}\right)+(t+L)\frac H{\|H\|}\one\left({\|H\|>t}\right)\,.$$
 \end{lemma}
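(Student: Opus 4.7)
The plan is to split $X^t/t$ according to the partition $\{\|H\|\le t\}\cup\{\|H\|>t\}$ and identify the two resulting pieces with the two summands in the definition \eqref{ld.t2.eq1} of $\nu$. Testing the ratio against an arbitrary $f\in C_c(\bbrdcomp\setminus\{0\})$, one writes
\[
\frac{E[f(X^t/t)]}{P(\|H\|>t)} \;=\; \frac{E\bigl[f(H/t)\one(\|H\|\le t)\bigr]}{P(\|H\|>t)} \;+\; E\!\left[f\!\left((1+L/t)\tfrac{H}{\|H\|}\right)\,\Big|\,\|H\|>t\right],
\]
where the second term uses independence of $L$ and $H$ to factor the conditional joint law of $(L,H/\|H\|)$ given $\|H\|>t$.

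For the first term, the continuous analogue of \eqref{ld.defn} — obtained by interpolating between $a_n$ and $a_{n+1}$ and using regular variation of $P(\|H\|>\cdot)$ — yields $P(H/t\in\cdot)/P(\|H\|>t)\vague\mu(\cdot)/\mu(B_1^c)$ on $\bbrdcomp\setminus\{0\}$. The scaling property $\mu(cA)=c^{-\alpha}\mu(A)$ together with local finiteness of $\mu$ on the annulus $B_2\setminus B_1$ forces $\mu(\mathcal S)=0$ (otherwise uncountably many disjoint concentric spheres would each carry positive $\mu$-mass). Hence $\one_{B_1}\!\cdot f$ is $\mu$-a.e.\ continuous and compactly supported in $\bbrdcomp\setminus\{0\}$, and the Portmanteau form of vague convergence delivers the first-term limit $\mu(B_1^c)^{-1}\int_{B_1}f\,d\mu$. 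For the second term, $L$ is a fixed random variable so $L/t\to 0$ in probability, while \eqref{ld.spectral} gives $H/\|H\|\mid\|H\|>t\Rightarrow W\sim\sigma$; Slutsky together with the continuous map $(r,w)\mapsto rw$ then produces $(1+L/t)H/\|H\|\mid\|H\|>t\Rightarrow W$, and bounded continuity of $f$ gives the limit $\int_{\mathcal S}f\,d\sigma$. Adding the two contributions yields $\int f\,d\nu$, as required.

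The only real subtlety is the bookkeeping at the unit sphere $\mathcal S$, which simultaneously plays the role of the truncation boundary in the definition of $X^t$ and of the support of the $\sigma$-component of $\nu$. The identity $\mu(\mathcal S)=0$ is what makes the cutoff $\one_{B_1}$ in the first term harmless for the limiting measure, and the second term concentrates onto $\mathcal S$ from just outside (its norm being $1+L/t\downarrow 1$), contributing precisely $\sigma$. No tail assumption on $L$ beyond a.s.\ finiteness is needed.
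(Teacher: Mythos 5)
Your proof is correct, and it takes a genuinely different route from the paper's. The paper argues at the level of sets: it first shows $\lim_t P(X^t/t\in B_\epsilon^c)/P(\|H\|>t)=\nu(B_\epsilon^c)$ for $\epsilon\in(0,1)$, exploiting the exact identity $P(X^t/t\in B_\epsilon^c)=P(H/t\in B_\epsilon^c)$ (since $\|X^t\|\ge t$ whenever $\|H\|>t$), and then establishes the Portmanteau upper bound $\limsup_t P(X^t/t\in A)/P(\|H\|>t)\le\nu(A)$ for closed $A$ bounded away from zero by a three-way split of $A$ across the annuli $B_{1-\epsilon}$, $B_{1+\epsilon}\setminus \mathrm{int}(B_{1-\epsilon})$, $B_{1+\epsilon}^c$, projecting the middle piece to $\mathcal S$ with $T(x)=x/\|x\|$ and invoking \eqref{ld.spectral}. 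You instead test against $f\in C_c(\bbrdcomp\setminus\{0\})$ and decompose $E[f(X^t/t)]$ along $\{\|H\|\le t\}\cup\{\|H\|>t\}$, so the two pieces line up exactly with the two summands of $\nu$ in \eqref{ld.t2.eq1}; the first piece is handled by vague convergence of $P(H/t\in\cdot)/P(\|H\|>t)$ plus the a.e.-continuity of $f\one_{B_1}$, and the second by independence of $L$ and $H$, $L/t\to0$, \eqref{ld.spectral}, and the continuous-mapping theorem. Both proofs rely (the paper implicitly, via ``$B_\epsilon^c$ is a $\mu$-continuity set''; you explicitly) on $\mu(\mathcal S)=0$, which follows from scaling and local finiteness of $\mu$. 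Your functional approach is arguably cleaner in that the additive structure of $\nu$ is reproduced directly, with no need for the sandwich at the sphere; the paper's set-level approach keeps closer to classical regular-variation technique and makes the exact cancellation $P(X^t/t\in B_\epsilon^c)=P(H/t\in B_\epsilon^c)$ more visible. Both are valid; neither requires any moment or tail assumption on $L$ beyond a.s.\ finiteness.
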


 \begin{proof} Since for all $\epsilon>0$, $\nu$ restricted to $B_\epsilon^c$ is a finite measure, it suffices to show that for $\epsilon\in(0,1)$,
\begin{equation}\label{ld.l11.eq11}
\lim_{t\to\infty}\frac{P(X^t/t\in
B_\epsilon^c)}{P(\|H\|>t)}=\nu(B_\epsilon^c)\,,
\end{equation}
and that for $A\subset{\mathbb R}^d$ which is closed and bounded away from zero,
 \begin{equation}\label{ld.l11.eq1}
\limsup_{t\rightarrow\infty}\frac{P(X^t/t\in
A)}{P(\|H\|>t)}\le\nu(A)\,.
\end{equation}
For \eqref{ld.l11.eq11}, note that
\begin{eqnarray*}
\lim_{t\to\infty}\frac{P(X^t/t\in
B_\epsilon^c)}{P(\|H\|>t)}&=&\lim_{t\to\infty}\frac{P(H/t\in
B_\epsilon^c)}{P(\|H\|>t)}\\
&=&\nu(B_\epsilon^c)\,,
\end{eqnarray*}
where the second equality follows from the fact that
\begin{equation}\label{new.eq1}
\frac{P(H/t\in\cdot)}{P(\|H\|>t)}\vague\frac{\mu(\cdot)}{\mu(B_1^c)}
\end{equation}
in $\bbrdcomp\setminus\{0\}$, which is a consequence of \eqref{ld.defn}, and that $B_\epsilon^c$ is a $\mu$-continuous set. For \eqref{ld.l11.eq1}, fix an $A\subset\bbr^d$ which is closed and bounded away from zero. Define a function
$T$ from ${\mathbb R}^d\setminus\{0\}$ to $\mathcal S$ by
$T(x)=\frac x{\|x\|}$.
Since $A$ is closed,
 $$\bigcap_{\epsilon>0}T(A\cap(B_{1+\epsilon}\setminus int(B_{1-\epsilon})))=A\cap\mathcal S\,.$$
 Thus, for fixed $\delta>0$ there is $\epsilon>0$ so that
 $$\sigma\left(T(A\cap(B_{1+\epsilon}\setminus int(B_{1-\epsilon})))\right)\le\sigma(A\cap{\mathcal S})+\delta\,.$$
 Define
 $$\tilde A:=T(A\cap(B_{1+\epsilon}\setminus int(B_{1-\epsilon})))\,.$$
Since $A\cap(B_{1+\epsilon}\setminus int(B_{1-\epsilon}))$ is
compact and $T$ is continuous, $\tilde A$ is compact and hence
closed. Note that
$$P(X^t/t\in A)\le$$
$$P(X^t/t\in A\cap B_{1-\epsilon})+P(X^t/t\in A\cap(B_{1+\epsilon}\setminus int(B_{1-\epsilon}))+P(\|X^t\|\ge(1+\epsilon)t)\,.$$
Clearly
$$P(X^t/t\in A\cap B_{1-\epsilon})=P(H/t\in A\cap B_{1-\epsilon})$$
and hence by \eqref{new.eq1}, it follows that
$$\limsup_{t\rightarrow\infty}\frac{P(X^t/t\in A\cap B_{1-\epsilon})}{P(\|H\|>t)}\le\frac{\mu(A\cap B_1)}{\mu(B_1^c)}\,.$$
It is also clear that, as $t\longrightarrow\infty$,
$$P(\|X^t\|\ge(1+\epsilon)t)=o\left(P(\|H\|>t)\right)\,.$$
Note that
\begin{eqnarray*}
P\left(X^t/t\in A\cap(B_{1+\epsilon}\setminus
int(B_{1-\epsilon}))\right)&\le&P(H/\|H\|\in\tilde A,\|H\|\ge(1-\epsilon)t)\,.
\end{eqnarray*}
Since $\tilde A$ is closed, by \eqref{ld.spectral} and the fact that $P(\|H\|>\cdot)$ is regularly varying with index $-\alpha$, it follows that
\begin{eqnarray*}
\limsup_{t\rightarrow\infty}\frac{P(H/\|H\|\in\tilde A,\|H\|\ge(1-\epsilon)t)}{P(\|H\|>t)}&\le&(1-\epsilon)^{-\alpha}\sigma(\tilde A)\\
&\le&(1-\epsilon)^{-\alpha}(\sigma(A\cap\mathcal S)+\delta)\,.
\end{eqnarray*}
Since $\epsilon$ and $\delta$ can be chosen to be arbitrarily
small, this shows (\ref{ld.l11.eq1}) and thus completes the
proof.
 \end{proof}

The next lemma studies the asymptotics of the sum of a fixed number ($k$) of random variables in the triangular array $\{X_{nj}:1\le j\le n\}$, as the row index ($n$) goes to infinity.

\begin{lemma}\label{ld.l12}Suppose that \eqref{ld.t2.eq2} holds. Then,
 \begin{eqnarray*}
\frac{P\left(M_n^{-1}\sum_{j=1}^kX_{nj}\in\cdot\right)}{P(\|H\|>M_n)^k}
 &\stackrel v\longrightarrow&\nu^{(k)}(\cdot)\,,
 \end{eqnarray*}
 on $\bbrdcomp\setminus B_{k-1}$.
\end{lemma}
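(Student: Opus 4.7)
The approach is to pass from the marginal vague convergence given by Lemma \ref{ld.l11} to the joint distribution of $(Z_1, \ldots, Z_k)$ by independence, and then push the result forward through the summation map $T_k(x_1, \ldots, x_k) := \sum_{j=1}^k x_j$, noting that $\nu^{(k)} = \nu^{\otimes k} \circ T_k^{-1}$ by its definition. Write $c_n := P(\|H\| > M_n)$ and $Z_j := M_n^{-1} X_{nj}$ for $j = 1, \ldots, k$.

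The first step is to establish the joint vague convergence
$$\frac{P((Z_1, \ldots, Z_k) \in \cdot)}{c_n^k} \vague \nu^{\otimes k}$$
on $(\bbrdcomp \setminus \{0\})^k$. For any product $A_1 \times \cdots \times A_k$ of bounded $\nu$-continuity sets each bounded away from $0$, Lemma \ref{ld.l11} and the independence of the $Z_j$'s yield
$$\frac{P((Z_1, \ldots, Z_k) \in A_1 \times \cdots \times A_k)}{c_n^k} = \prod_{j=1}^k \frac{P(Z_j \in A_j)}{c_n} \longrightarrow \prod_{j=1}^k \nu(A_j),$$
and a standard approximation argument extends this to vague convergence of Radon measures on $(\bbrdcomp \setminus \{0\})^k$.

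Next, I would transfer this into convergence of the image law under $T_k$ on $\bbrdcomp \setminus B_{k-1}$. Fix a bounded $\nu^{(k)}$-continuity set $A \subset B_R \setminus B_{k-1+\epsilon_0}$ for some $R, \epsilon_0 > 0$. Two issues must be resolved: (a)~the preimage $T_k^{-1}(A)$ is not automatically bounded away from the coordinate axes of $(\bbrdcomp)^k$; and (b)~coordinates $Z_j$ with $\|Z_j\| > 1$ are possible when $L_j$ is large. For (a), if $(x_1, \ldots, x_k) \in T_k^{-1}(A)$ and $\max_j \|x_j\| \le 1 + \delta$, then the triangle inequality gives
$$\|x_j\| \;\ge\; \|T_k(x)\| - \sum_{i \ne j} \|x_i\| \;\ge\; (k - 1 + \epsilon_0) - (k-1)(1 + \delta) \;\ge\; \epsilon_0/2$$
for all sufficiently small $\delta$, so the relevant preimage lies inside the product $\prod_{j=1}^k B_{\epsilon_0/2}^c$, where the product convergence of the previous paragraph applies. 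For (b), since $\{\|Z_j\| > 1 + \delta\} = \{\|H_j\| > M_n\} \cap \{L_j > \delta M_n\}$, independence together with hypothesis \eqref{ld.t2.eq2} and the regular variation of $P(\|H\| > \cdot)$ give
$$P(\|Z_j\| > 1 + \delta) = c_n \cdot P(L > \delta M_n) = c_n \cdot o\bigl(P(\|H\| > \delta M_n)^{k-1}\bigr) = o(c_n^k),$$
so $P(\max_j \|Z_j\| > 1 + \delta) = o(c_n^k)$ and this atypical event is negligible. Combining (a) and (b), $P(T_k(Z) \in A)/c_n^k$ converges to $\nu^{\otimes k}(T_k^{-1}(A) \cap B_{1+\delta}^k) = \nu^{\otimes k}(T_k^{-1}(A)) = \nu^{(k)}(A)$, where the first equality uses that $\nu$ is supported in $B_1$ and hence $\nu^{\otimes k}$ in $B_1^k$.

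The step I expect to demand the most care is the passage from joint vague convergence on the product space to convergence of the pushforward on $\bbrdcomp \setminus B_{k-1}$: one must verify that the cutoff sets used (such as $T_k^{-1}(A) \cap B_{1+\delta}^k$) are $\nu^{\otimes k}$-continuity sets in the product space, which follows from $\nu^{\otimes k}(T_k^{-1}(\partial A)) = \nu^{(k)}(\partial A) = 0$ together with $\nu^{\otimes k}$ being supported in $B_1^k$, so that the cutoff boundary $\{\max_j \|x_j\| = 1 + \delta\}$ carries no mass.
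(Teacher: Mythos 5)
Your proof is correct and rests on exactly the same two observations as the paper's: (a) the triangle-inequality argument showing that when $\sum_j x_j \in A$ with $A$ bounded away from $B_{k-1}$ and all $\|x_j\| \le 1+\delta$, each coordinate must also be bounded away from $0$; and (b) the truncation of the atypical event $\{\max_j\|Z_j\|>1+\delta\}$, whose probability is $o(c_n^k)$ by \eqref{ld.t2.eq2} and regular variation. The paper organizes these steps slightly differently — it restricts $P(M_n^{-1}X_{n1}\in\cdot)$ to $B_\eta^c$ (where $\eta=\delta-(k-1)(1+\epsilon)$), invokes \emph{weak} convergence of the resulting finite measures via Lemma~\ref{ld.l11}, forms the $k$-fold product, and applies the continuous mapping theorem to the explicit bounded function $f(x_1,\dots,x_k)=\prod_j\one(\|x_j\|\le1+\epsilon)\,\one(\sum_j x_j\in A)$ after checking its discontinuity set is $\nu^{\otimes k}$-null — whereas you first assert joint vague convergence of $(Z_1,\dots,Z_k)/c_n^k$ to $\nu^{\otimes k}$ on $(\bbrdcomp\setminus\{0\})^k$ and then push forward through $T_k$. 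These are the same argument in different dress; your pushforward formulation is marginally slicker, while the paper's is more explicit about the passage to product weak convergence (which you compress into ``a standard approximation argument''). No gap.
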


\begin{proof}
Fix a $\nu^{(k)}$ continuity set $A\subset B_\delta^c$ for some
$k-1<\delta<k$. Fix $\epsilon>0$ so that $(k-1)(1+\epsilon)<\delta$. Clearly,
 \begin{eqnarray*}
 && P\left(M_n^{-1}\sum_{j=1}^kX_{nj}\in A,\|X_{nj}\|\le (1+\epsilon)M_n,1\le j\le k\right)\\
&\le&P\left(M_n^{-1}\sum_{j=1}^kX_{nj}\in A\right)\\
&\le& P\left(M_n^{-1}\sum_{j=1}^kX_{nj}\in A,\|X_{nj}\|\le (1+\epsilon)M_n,1\le j\le k\right)\\
&&+kP(L>\epsilon M_n)P(\|H\|>M_n)\,.
\end{eqnarray*}
By the assumption on $L$, it follows that
\begin{eqnarray*}
P(L>\epsilon M_n)&=&o(P(\|H\|>\epsilon M_n)^{k-1})\\
&=&o(P(\|H\|>M_n)^{k-1})\,.
\end{eqnarray*}
Since $A\subset B_\delta^c$ where $\delta>(k-1)(1+\epsilon)$,
\begin{eqnarray}
&& P\left(M_n^{-1}\sum_{j=1}^kX_{nj}\in A,\|X_{nj}\|\le (1+\epsilon)M_n,1\le j\le k\right)\nonumber\\
&&=\int_{\{\eta<\|x_1\|\le1+\epsilon\}}\cdots\int_{\{\eta<\|x_k\|\le1+\epsilon\}}\one\left(\sum_{j=1}^kx_j\in A\right)\nonumber\\
&&\,\,\,\,\,P(M_n^{-1}X_{n1}\in dx_1)\ldots P(M_n^{-1}X_{nk}\in dx_k)\nonumber\\
&&=\int_{\{\|x_1\|\le1+\epsilon\}}\cdots\int_{\{\|x_k\|\le1+\epsilon\}}\one\left(\sum_{j=1}^kx_j\in A\right)P_n(dx_1)\ldots P_n(dx_k)\,,\label{ld.l12.eq1}
\end{eqnarray}
where $\eta:=\delta-(k-1)(1+\epsilon)>0$ and $P_n(\cdot)$ denotes the restriction of $P(M_n^{-1}X_{n1}\in\cdot)$ to $\bbr^d\setminus B_\eta$. Let $\tilde\nu$ denote the restriction of $\nu$ to $\bbr^d\setminus B_\eta$. Then, by Lemma \ref{ld.l11}, as $n\longrightarrow\infty$,
$$\frac{P_n(\cdot)}{P(\|H\|>M_n)}\weak\tilde\nu(\cdot)$$
on $\bbr^d\setminus B_\eta$. Thus,
$$\frac{P_n(dx_1)\ldots P_n(dx_k)}{P(\|H\|>M_n)^k}\weak\tilde\nu(dx_1)\ldots\tilde\nu(dx_k)$$
on $(\bbr^d\setminus B_\eta)^k$, as $n\longrightarrow\infty$.
Consider the function  $f:\bbr^{d\times k}\longrightarrow\bbr$ defined by
$$f(x_1,\ldots,x_k)=\one(\|x_1\|\le1+\epsilon)\ldots\one(\|x_k\|\le1+\epsilon)\one\left(\sum_{j=1}^kx_j\in A\right)\,.$$
The set of discontinuities of $f$ is contained in
$$\bigcup_{j=1}^k\{(x_1,\ldots,x_k):\|x_j\|=1+\epsilon\}\cup\left\{(x_1,\ldots,x_k):\sum_{j=1}^kx_j\in\partial A\right\}\,.$$
The product measure ${\tilde\nu}^k$ gives zero measure to this set because $\nu$ (and hence $\tilde\nu$) does not charge anything outside $B_1$ and the set $A$ has been chosen to satisfy
$$\int\ldots\int\one\left(\sum_{j=1}^kx_j\in\partial A\right)\nu(dx_1)\ldots\nu(dx_k)=0\,.$$
Thus, as $n\longrightarrow\infty$, the right hand side of \eqref{ld.l12.eq1} is asymptotically equivalent to
$$P(\|H\|>M_n)^k\int_{\{\|x_1\|\le1+\epsilon\}}\cdots\int_{\{\|x_k\|\le1+\epsilon\}}\one\left(\sum_{j=1}^kx_j\in A\right)\tilde\nu(dx_1)\ldots\tilde\nu(dx_k)\,,$$
which is same as $P(\|H\|>M_n)^k\nu^{(k)}(A)$.
This completes the proof.
\end{proof}

We shall also need the following result, which has been proved in\\
\cite{prokhorov:1959}.

\begin{lemma}\label{ld.l1} If $X_1,\ldots,X_N$ are i.i.d. $\bbr$-valued independent random variables with $|X_i|\le C$ a.s. where $0<C<\infty$, then, for $\lambda>0$,
$$P(S_N-ES_N>\lambda)\le\exp\left\{-\frac\lambda{2C}\sinh^{-1}\frac{C\lambda}{2\Var(S_N)}\right\}\,,$$
where
$$S_N:=\sum_{i=1}^NX_i\,.$$
\end{lemma}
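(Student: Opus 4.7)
The plan is to apply the classical Cram\'er--Chernoff / exponential Markov machinery. For any $t>0$, $P(S_N-ES_N>\lambda)\le e^{-t\lambda}E[e^{t(S_N-ES_N)}]$, and by independence the moment generating function factorizes as $\prod_{i=1}^N E[e^{t(X_i-EX_i)}]$. Setting $Y_i=X_i-EX_i$, one has $EY_i=0$ and $|Y_i|\le 2C$ almost surely, and after optimization over $t$ the whole problem reduces to obtaining a sufficiently sharp estimate of $E[e^{tY_i}]$ that exploits both the boundedness and the variance $\Var(Y_i)=\Var(X_i)$.

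The key technical step is a variance-aware MGF inequality. For a zero-mean random variable $Y$ with $|Y|\le C'$, the worst-case MGF at fixed variance $\sigma^2$ is attained by the symmetric three-point law on $\{-C',0,C'\}$, giving the extremal estimate
$$E[e^{tY}]\le 1+\frac{\Var(Y)}{C'^2}\bigl(\cosh(tC')-1\bigr).$$
This can be proved either by a Lagrange multiplier argument on the space of distributions supported in $[-C',C']$ with fixed first two moments, or by observing that $y\mapsto(\cosh(ty)-1)/y^2$ is even and monotone increasing in $|y|$, so that $E[\cosh(tY)-1]\le (\Var Y/C'^2)(\cosh(tC')-1)$, while the odd part $E[\sinh(tY)]$ can be absorbed by a convexity/centering argument. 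Taking logarithms, using $\log(1+u)\le u$, and summing in $i$ yields
$$\log E[e^{t(S_N-ES_N)}]\le \frac{\Var(S_N)}{C'^2}\bigl(\cosh(tC')-1\bigr),$$
with $C'$ a valid upper bound on $|Y_i|$ (for instance $C'=2C$).

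It remains to optimize in $t$. Differentiating the exponent $-t\lambda+\frac{V}{C'^2}(\cosh(tC')-1)$, with $V=\Var(S_N)$, and setting the derivative to zero gives $\sinh(tC')=C'\lambda/V$, i.e.\ $t^{*}=\frac{1}{C'}\sinh^{-1}(C'\lambda/V)$. Applying the elementary inequality $\cosh u-1\le \tfrac{1}{2}u\sinh u$ at $u=t^{*}C'$ shows that the positive MGF term is at most one half of the linear term, so the exponent is bounded above by $-\tfrac12 t^{*}\lambda$. Substituting back yields a one-sided tail bound of the form $\exp\bigl(-\tfrac{\lambda}{2C'}\sinh^{-1}(C'\lambda/V)\bigr)$, and the stated inequality follows after a mild weakening of the constants (the factors of $2$ in the arguments of $\sinh^{-1}$ in the statement arising either from using $C'=2C$ or from a slightly coarser upper bound in the final step, since $\sinh^{-1}$ is concave and non-negative).

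The main obstacle is the extremal MGF inequality: obtaining a bound whose Legendre transform leads precisely to an $\sinh^{-1}$ rate function requires exactly this variance-aware "worst case at $\{-C',0,C'\}$" estimate, rather than the coarser boundedness-only bound $E[e^{tY}]\le \cosh(tC')$ (which would produce Hoeffding's inequality) or the Bennett-type bound with $e^{tC'}$ in place of $\cosh(tC')$ (which would produce a rate involving $(1+u)\log(1+u)-u$ instead). The Chernoff step, the factorization by independence, and the final optimization are then routine calculus.
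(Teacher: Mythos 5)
Your key technical step is false. For a mean-zero $Y$ with $|Y|\le C'$ and prescribed variance, the worst case for $E[e^{tY}]$ is not the symmetric three-point law on $\{-C',0,C'\}$ but an asymmetric two-point law, and the latter violates your bound: for small $a>0$ put $P(Y=C')=a/(C'+a)$, $P(Y=-a)=C'/(C'+a)$, so that $EY=0$, $\Var(Y)=aC'$, and
$$E[e^{tY}]=1+\frac{a}{C'}\left(e^{tC'}-1-tC'\right)+O(a^2)\,,$$
which exceeds $1+\frac{a}{C'}\left(\cosh(tC')-1\right)$ for every $t>0$, since $(e^{u}-1-u)-(\cosh u-1)=\sinh u-u>0$. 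This is exactly where the claim that ``the odd part $E[\sinh(tY)]$ can be absorbed'' breaks down: for such laws the odd part is strictly positive and of the same order $a$ as the even part, and the best variance-aware bound under a two-sided bound is the Bennett-type one (with $e^{tC'}-1-tC'$), which you explicitly discard. There is a second, independent gap at the end: even granting your $\cosh$ bound with $C'=2C$, the Chernoff exponent you obtain is $-\frac{\lambda}{4C}\sinh^{-1}\bigl(\frac{2C\lambda}{V}\bigr)$ with $V=\Var(S_N)$, and writing $x=\frac{C\lambda}{2V}$ this dominates the stated exponent $-\frac{\lambda}{2C}\sinh^{-1}(x)$ only when $\sinh^{-1}(4x)\ge2\sinh^{-1}(x)$, i.e.\ only for $x\le\sqrt3$ (because $2\sinh^{-1}x=\sinh^{-1}(2x\sqrt{1+x^2})$); for $C\lambda>2\sqrt3\,\Var(S_N)$ the ``mild weakening of constants'' goes in the wrong direction, so the argument cannot be closed this way.

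For comparison: the paper offers no proof, quoting the lemma from Prokhorov (1959), where the summands are centered and bounded by $C$ — and the centering is not cosmetic. With only $|X_i|\le C$ the stated bound can actually fail: take $C=1$, $N=10^6$, $P(X_i=1)=10^{-5}=1-P(X_i=-1)$ and $\lambda=1979$; then $P(S_N-ES_N>\lambda)=P(\mathrm{Bin}(10^6,10^{-5})\ge1000)\approx e^{-3620}$, while the right-hand side is about $e^{-3861}$. So the version you should prove is the one with $C$ a bound on the centered summands, and there your Chernoff scheme does work once you keep the Bennett bound rather than the false $\cosh$ bound: for mean-zero $|Y|\le C$,
$$E e^{tY}\le 1+\frac{\Var(Y)}{C^2}\left(e^{tC}-1-tC\right)\le\exp\left\{\frac{\Var(Y)}{C^2}\,tC\sinh(tC)\right\}\,,$$
where the second step uses the elementary inequality $e^{u}-1-u\le u\sinh u$ for $u\ge0$ (the difference vanishes to second order at $0$ and has second derivative $e^{-u}+u\sinh u>0$). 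Multiplying over $i$ and evaluating the Chernoff exponent not at the exact optimizer but at $t=\frac1C\sinh^{-1}\bigl(\frac{C\lambda}{2V}\bigr)$ gives $-t\lambda+\frac{tV}{C}\cdot\frac{C\lambda}{2V}=-\frac{t\lambda}{2}$, which is precisely the asserted bound; applied to $X_i-EX_i$ under $|X_i|\le C$ it yields the same bound with $2C$ in place of $C$, which is all that the application in the proof of Theorem \ref{ld.t2} requires, since the constants there are absorbed into $K_1,K_2,K_3$.
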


\begin{proof}[Proof of Theorem \ref{ld.t2}] We shall show that for every $\nu^{(k)}$-continuous set $A\subset\bbr^d\setminus B_\delta$ for some $\delta>k-1$,
\begin{equation}\label{ld.t1.eq3}
\lim_{n\to\infty}\frac{P(M_n^{-1}S_n\in A)}{\{nP(\|H\|>M_n)\}^k}=\frac1{k!}\nu^{(k)}(A)\,.
\end{equation}
We first show the lower bound, {\it i.e.}, the lim inf of the left hand side is at least as much as the right hand side. Fix a set $A$ as described above. Define for $\epsilon>0$
$$A^{-\epsilon}:=\{x\in A:\mbox{ for all }y\in\bbr^d\mbox{ with }\|y-x\|<\epsilon,y\in A\}\,.$$
Clearly,
$$\lim_{\epsilon\downarrow0}\nu^{(k)}(A^{-\epsilon})=\nu^{(k)}(int(A))=\nu^{(k)}(A)\,,$$
where the second equality is true because $A$ is $\nu^{(k)}$-continuous.
Thus, for the lower bound, it suffices to show that for all $\epsilon>0$ so that $A^{-\epsilon}$ is a $\nu^{(k)}$-continuity set (which is true for all but countably many $\epsilon$'s),
\begin{equation}\label{ld.t1.eq2}
\liminf_{n\to\infty}\frac{P(M_n^{-1}S_n\in A)}{\{nP(\|H\|>M_n)\}^k}\ge\frac1{k!}\nu^{(k)}(A^{-\epsilon})\,.
\end{equation}
Fix $\epsilon>0$ so that $A^{-\epsilon}$ is a $\nu^{(k)}$-continuity set. Since we want to show \eqref{ld.t1.eq2}, we can assume without loss of generality that $\nu^{(k)}(A^{-\epsilon})>0$. Fix $n\ge k$ and define for $1\le j_1<\ldots<j_k\le n$
$$C_{j_1\ldots j_k}:=\left\{M_n^{-1}\sum_{u=1}^kX_{nj_u}\in A^{-\epsilon},\biggl\|\sum_{i\in\{1,\ldots,n\}\setminus\{j_1,\ldots,j_k\}}X_{ni}\biggr\|<\epsilon M_n\right\}\,.$$
Though the above definition also depends on $n$, we suppress
that to keep the notation simple. Clearly,
$$P(M_n^{-1}S_n\in A)\ge P\left(\bigcup C_{j_1\ldots j_k}\right)\,,$$
where the union is taken over all subsets of size $k$ of $\{1,\ldots,n\}$,
and
\begin{eqnarray*}
P\left(C_{1,\ldots,k}\right)&=&P\left(M_n^{-1}\sum_{j=1}^{k}X_{nj}\in A^{-\epsilon}\right)P\left(\biggl\|\sum_{i=1}^{n-k}X_{ni}\biggr\|<M_n\epsilon\right)\\
&\sim&P(\|H\|>M_n)^k\nu^{(k)}(A^{-\epsilon})\,,
\end{eqnarray*}
as $n\longrightarrow\infty$, where the equivalence is true because
$M_n^{-1}\sum_{i=1}^{n-k}X_{ni}\prob0$ and by Lemma \ref{ld.l12}.
Thus, for \eqref{ld.t1.eq2}, all that remains to show is
\begin{equation}\label{ld.t2.eq13}
P\left(\bigcup C_{j_1\ldots j_k}\right)\sim\sum P\left(C_{j_1\ldots j_k}\right)\,,
\end{equation}
where the union and the sum are both taken over all subsets of $\{1,\ldots,n\}$. Fix $\eta>0$ so that $(k-1)(1+\eta)<\delta$ and subsets $\{i_1,\ldots,i_k\}$ and $\{j_1,\ldots,j_k\}$ of $\{1,\ldots,n\}$ so that
\begin{equation}\label{ld.subsets}
\#\left(\{i_1,\ldots,i_k\}\cap\{j_1,\ldots,j_k\}\right)=l<k\,.
\end{equation}
Note that,
\begin{eqnarray*}
&&P\left(C_{i_1\ldots i_k}\cap C_{j_1\ldots j_k}\right)\\
&\le&P\biggl(M_n^{-1}\biggl\|\sum_{u=1}^kX_{nj_u}\biggr\|>\delta, M_n^{-1}\biggl\|\sum_{u=1}^kX_{ni_u}\biggr\|>\delta\biggr)\\
&\le&P\biggl(M_n^{-1}\biggl\|\sum_{u=1}^kX_{nj_u}\biggr\|>\delta,M_n^{-1}\biggl\|\sum_{u=1}^kX_{ni_u}\biggr\|>\delta,\\
&&\|X_{nu}\|\le(1+\eta)M_n\mbox{ for }u\in\{i_1,\ldots,i_k\}\cup\{j_1,\ldots,j_k\}\biggr)\\
&&+2kP(L>\eta M_n)P(\|H\|>M_n)\\
&\le&P\left(\|X_{nj}\|>[\delta-(k-1)(1+\eta)]M_n\mbox{ for }1\le j\le2k-l\right)\\
&&+o(P(\|H\|>M_n)^k)\\
&=&O(P(\|H\|>M_n)^{2k-l})\,.
\end{eqnarray*}
Clearly, for fixed $l$, there are at most $O(n^{2k-l})$ pairs of subsets satisfying \eqref{ld.subsets}.
Thus,
\begin{eqnarray*}
\sum P\left(C_{i_1\ldots i_k}\cap C_{j_1\ldots j_k}\right)&=&\sum_{l=0}^{k-1}O(n^{2k-l}P(\|H\|>M_n)^{2k-l})\\
&=&o(n^kP(\|H\|>M_n)^k)\,,
\end{eqnarray*}
where the sum in the left hand side of the first line is taken over all pairs of distinct subsets $\{i_1,\ldots,i_k\}$ and $\{j_1,\ldots,j_k\}$ of $\{1,\ldots,n\}$. This shows \eqref{ld.t2.eq13} and thus completes the proof of the lower bound.

 For the upper bound, choose a sequence $z_n$ satisfying
\begin{eqnarray}\label{ld.t2.eq3}
&\{nP(\|H\|>M_n)\}^{\frac{k+1}{k+2}}\ll nP(\|H\|>z_n)&\\
&\ll\{nP(\|H\|>M_n)\}^{\frac{k}{k+1}}\,,&\mbox{ if }\alpha<2\,,\nonumber\\
\label{ld.t2.eq4}
&nP\left(\|H\|>\frac{M_n}{\log M_n}\right)\ll nP(\|H\|>z_n)&\\
&\ll \min\left(\{nP(\|H\|>M_n)\}^{\frac k{k+1}},nP\left(\|H\|>\frac n{M_n}\right)\right)\,,&\mbox{ if }\alpha>2\,,\nonumber\\
\label{ld.t2.eq6}
&nP\left(\|H\|>\frac{M_n}{\log M_n}\right)\ll nP(\|H\|>z_n)&\\
&\ll \min\left(\{nP(\|H\|>M_n)\}^{\frac k{k+1}},nP\left(\|H\|>\left(\frac n{M_n}\right)^{1+\gamma}\right)\right)\,,&\mbox{ if }\alpha=2\,,\nonumber
\end{eqnarray}
where $\gamma$ is same as that in \eqref{alpha2}. Note that if $u_n$ and $v_n$ are
sequences satisfying $u_n\ll v_n\ll1$, then a
sequence $w_n$ with
$$u_n\ll P(\|H\|>w_n)\ll v_n\,,$$
can be constructed in the following way. Set, for example,
$$
w_n:=U^\leftarrow\left((u_nv_n)^{-1/2}\right)\,,
$$
where
$$
U(\cdot):=1/P(\|H\|>\cdot)\,.
$$
The reader is referred to \cite{resnick:2007} for a definition of $U^\leftarrow(\cdot)$ (page 18), and a proof of the fact that $w_n$ defined as above works (Subsection 2.2.1, page 23-24). Thus, existence of $z_n$ satisfying \eqref{ld.t2.eq3} is immediate from the assumption that $nP(\|H\|>M_n)$ goes to zero as $n\longrightarrow\infty$. In view of \eqref{alphag2}, a sequence satisfying \eqref{ld.t2.eq4} will exist if it can be shown that
\begin{eqnarray}
nP\left(\|H\|>\frac{M_n}{\log M_n}\right)&\ll&\{nP(\|H\|>M_n)\}^\beta\,,\label{ld.cond2}
\end{eqnarray}
when $\alpha>2$, where $\beta=k/(k+1)$. Letting $\epsilon\in(0,\alpha-2)$, $\delta\in(0,\epsilon(1-\beta)/2)$ and $l(x):=x^\alpha P(\|H\|>x)$, note that
\begin{eqnarray}
&&\frac{nP\left(\|H\|>\frac{M_n}{\log M_n}\right)}{\{nP(\|H\|>M_n)\}^\beta}\nonumber\\
&=&n^{1-\beta}M_n^{-\alpha(1-\beta)}(\log M_n)^\alpha l(M_n/\log M_n)l(M_n)^{-\beta}\nonumber\\
&\ll&n^{1-\beta}M_n^{-\alpha(1-\beta)}(\log M_n)^\alpha (M_n/\log M_n)^{\epsilon(1-\beta)/2}M_n^{\epsilon(1-\beta)/2-\delta}\nonumber\\
&=&n^{1-\beta}M_n^{(\epsilon-\alpha)(1-\beta)-\delta}(\log M_n)^c\nonumber\\
&\ll&n^{1-\beta}M_n^{(\epsilon-\alpha)(1-\beta)}\,,\label{new.eq2}
\end{eqnarray}
where $c:=\alpha-\epsilon(1-\beta)/2$. Using the fact that $M_n\gg\sqrt{n}$, which is a consequence of \eqref{alphag2}, it follows that
\begin{eqnarray*}
n^{1-\beta}M_n^{(\epsilon-\alpha)(1-\beta)}&\ll&n^{1-\beta}n^{(\epsilon-\alpha)(1-\beta)/2}\\
&=&n^{(1-\beta)(2-\alpha+\epsilon)/2}\\
&\to&0\mbox{ by choice of }\epsilon\,.
\end{eqnarray*}
This clearly shows \eqref{ld.cond2} when $\alpha>2$.

To establish that a sequence $z_n$ satisfying \eqref{ld.t2.eq6} exists, it suffices to check \eqref{ld.cond2} and that
\begin{equation}\label{ld.t2.eq7}
\frac{M_n}{\log M_n}\gg\left(\frac n{M_n}\right)^{1+\gamma}\,,
\end{equation}
both when $\alpha=2$.
For \eqref{ld.cond2}, let $0<\epsilon<2\gamma/(1+\gamma)<2$, where $\gamma$ is same as that in \eqref{alpha2}. A quick inspection reveals that the arguments leading to \eqref{new.eq2} hold regardless of the values of $\epsilon$ and $\alpha$. Using \eqref{alpha2}, it follows that when $\alpha=2$,
\begin{eqnarray*}
n^{1-\beta}M_n^{(\epsilon-\alpha)(1-\beta)}
&\ll&n^{1-\beta}n^{(\epsilon-2)(1-\beta)(1+\gamma)/2}\\
&\to&0\mbox{ by choice of }\epsilon\,.
\end{eqnarray*}
Thus, \eqref{ld.cond2} holds when $\alpha=2$.
Using \eqref{alpha2} once again, \eqref{ld.t2.eq7} follows.

Write
$$\tilde S_n:=\sum_{j=1}^nX_{nj}\one{(\|X_{nj}\|\le z_n)}\,.$$
Fix $0<\epsilon<\delta-k+1$ and define
$$A^\epsilon:=\{y\in\bbr^d:\|y-x\|<\epsilon\mbox{ for some }x\in A\}\,.$$
Assume that $\epsilon$ is chosen so that $A^\epsilon$ is
also a $\nu^{(k)}$-continuity set. Define the events
\begin{eqnarray*}
D_n&:=&\Biggl\{M_n^{-1}\sum_{u=1}^lX_{nj_u}\in A^\epsilon\mbox{ for at least one tuple }\\
&&1\le j_1<j_2<\ldots<j_l\le n, 1\le l<k\Biggr\}\,,\\
E_n&:=&\Bigg\{M_n^{-1}\sum_{u=1}^kX_{nj_u}\in A^\epsilon\mbox{ for at least one tuple }\\
&&1\le j_1<j_2<\ldots<j_k\le n\Bigg\}\,,\\
F_n&:=&\{\|X_{nj}\|> z_n\mbox{ for at least }(k+1)\mbox{ many }j\mbox{'s}\le n\}\,,\\
G_n&:=&\{\|\tilde S_n\|>\epsilon M_n\}\,.
\end{eqnarray*}
Clearly,
$$P\left(M_n^{-1}S_n\in A\right)\le P(D_n)+P(E_n)+P(F_n)+P(G_n)\,.$$
Also,
\begin{eqnarray*}
&&P(E_n)\\
&\le&\frac{n^k}{k!}P\left(M_n^{-1}\sum_{j=1}^kX_{nj}\in A^\epsilon\right)\\
&\sim&\frac1{k!}\{nP(\|H\|>M_n)\}^k\int\cdots\int\one\left(\sum_{j=1}^kx_{j}\in
A^\epsilon\right)\nu(dx_1)\ldots\nu(dx_k)
\end{eqnarray*}
by Lemma \ref{ld.l12}. By the fact that $A\subset B_\delta^c$ and $\epsilon<\delta-k+1$,
\begin{eqnarray*}
P(D_n)&\le&\sum_{l=1}^{k-1}n^lP\left(\|\sum_{j=1}^lX_{nj}\|>(\delta-\epsilon)M_n\right)\\
&\le&\sum_{l=1}^{k-1} n^llP\left[L>\left\{(\delta-\epsilon)/l-1\right\}M_n\right]P(\|H\|>M_n)\\
&\ll& n^kP(\|H\|>M_n)^k\,,
\end{eqnarray*}
the last inequality following from \eqref{ld.t2.eq2}.
By the choice of $z_n$,
$$P(F_n)\le \{nP(\|H\|>z_n)\}^{k+1}\ll\{nP(\|H\|>M_n)\}^k\,.$$
All that remains is to show that
\begin{equation}\label{ld.t2.eq5}
P(G_n)\ll\{nP(\|H\|>M_n)\}^k\,.
\end{equation}
Recall that $\|\cdot\|$ denotes the $L^2$ norm as defined in \eqref{ld.norm}.
Denoting the coordinates of a $\bbr^d$-valued random variable $Y$ by $Y^{(j)}$ for $1\le j\le d$, note that
\begin{eqnarray*}
P(G_n)&\le&\sum_{j=1}^dP\left(|\tilde S_n^{(j)}|>\epsilon M_n/\sqrt d\right)\,.
\end{eqnarray*}
In view of this, to show \eqref{ld.t2.eq5}, It suffices to prove that for $1\le j\le d$,
\begin{eqnarray}
E S_n^{(j)}&=&o(M_n)\,\label{ld.t2.eq8}\\
P\left(|\tilde S_n^{(j)}-E\tilde S_n^{(j)}|>\theta M_n\right)&=&o\left(\{nP(\|H\|>M_n)\}^k\right)\,,\label{ld.t2.eq9}
\end{eqnarray}
for all $\theta>0$. By the assumption that $H$ has a symmetric law when $\alpha=1$, \eqref{ld.t2.eq8} is trivially true in that case.  We shall show \eqref{ld.t2.eq8} separately for the cases $\alpha<1$ and $\alpha>1$.
 We start with the case $\alpha>1$. Note that for $n$ large enough so that $z_n<M_n$,
\begin{eqnarray*}
|E S_n^{(j)}|&=&n|E[X_{n1}^{(j)}\one(\|X_{n1}\|\le z_n)]|\\
&=&n|E[H^{(j)}\one(\|H\|\le z_n)]|\\
(\mbox{since }EH=0\mbox{ when }\alpha>1)&=&n|E[H^{(j)}\one(\|H\|> z_n)]|\\
&\le&nE[|H^{(j)}|\one(\|H\|> z_n)]\\
&\le&nE[\|H\|\one(\|H\|> z_n)]\\
&=&O(nz_nP(\|H\|>z_n))\\
&=&o(M_n)\,.
\end{eqnarray*}
where the last step follows from the fact that the choice of $z_n$ implies that $z_n\ll M_n$ and that $nP(\|H\|>z_n)\ll1$, which are true, in fact, for all $\alpha$. For the case $\alpha<1$, note that for $n$ large enough,
\begin{eqnarray*}
|E S_n^{(j)}|&=&n|E[X_{n1}^{(j)}\one(\|X_{n1}\|\le z_n)]|\\
&=&n|E[H^{(j)}\one(\|H\|\le z_n)]|\\
&\le&nE[|H^{(j)}|\one(\|H\|\le z_n)]\\
&\le&nE[\|H\|\one(\|H\|\le z_n)]\\
&=&O(nz_nP(\|H\|>z_n))\\
&=&o(M_n)\,.
\end{eqnarray*}
Thus, \eqref{ld.t2.eq8} is established for all $\alpha$.
Note that by Lemma \ref{ld.l1},
$$P\left(|\tilde S_n^{(j)}-E\tilde S_n^{(j)}|>\theta M_n\right)\le K_1\exp\left\{-K_2\frac{M_n}{z_n}\sinh^{-1}K_3\frac{M_nz_n}{\Var(\tilde S_n^{(j)})}\right\}\,,$$
for finite positive constants $K_1, K_2$ and $K_3$. For \eqref{ld.t2.eq9}, all that needs to be shown is
\begin{equation}\label{ld.t2.eq12}
\exp\left\{-K_2\frac{M_n}{z_n}\sinh^{-1}K_3\frac{M_nz_n}{\Var(\tilde S_n^{(j)})}\right\}\ll\{nP(\|H\|>M_n)\}^k\,.
\end{equation}
We shall show this separately for the cases $\alpha<2$ and $\alpha\ge2$. We start with the case $\alpha\ge2$.
For \eqref{ld.t2.eq12}, we claim that it suffices to show that
\begin{eqnarray}
\frac{M_n}{z_n}&\gg&\log M_n\,,\label{ld.t2.eq10}\\
\mbox{and   }M_nz_n&\gg&\Var(\tilde S_n^{(j)})\,.\label{ld.t2.eq11}
\end{eqnarray}
Let $C=2k\alpha$ and notice that
\begin{equation}\label{new3}
M_n^CP(\|H\|>M_n)^k\gg1\gg n^{-k}\,.
\end{equation}
If \eqref{ld.t2.eq10} and \eqref{ld.t2.eq11} are true, it will follow that for large $n$,
$$
\exp\left\{-K_2\frac{M_n}{z_n}\sinh^{-1}K_3\frac{M_nz_n}{\Var(\tilde S_n^{(j)})}\right\}\le M_n^{-C}\,.
$$
In view of \eqref{new3}, this will show \eqref{ld.t2.eq12}.

It follows directly from choice of $z_n$ that \eqref{ld.t2.eq10}
is true. If $\alpha>2$, then
\begin{eqnarray*}
\frac{\Var(\tilde S_n^{(j)})}{M_nz_n}&=&O(n/M_nz_n)\\
&=&o(1)
\end{eqnarray*}
 by choice of $z_n$. If $\alpha=2$, then there is a slowly varying function $m:[0,\infty)\to\bbr$ at $\infty$ so that
\begin{eqnarray*}
\frac{\Var(\tilde S_n^{(j)})}{M_nz_n}&=&O(nm(z_n)/M_nz_n)\\
&=&O\left(n/M_nz_n^{1/(1+\gamma)}\right)\\
&=&o(1)\,.
\end{eqnarray*}
Finally, let us come to the case $\alpha<2$. Note that there is a slowly varying function $m:[0,\infty)\to\bbr$ at $\infty$ (which is possibly different from the one chosen just above), so that
\begin{eqnarray*}
\frac{M_n}{z_n}&\sim&\left(\frac{P(\|H\|>z_n)}{P(\|H\|>M_n)}\right)^{1/\alpha}\frac{m(M_n)}{m(z_n)}\\
&\gg&\left(\frac{P(\|H\|>z_n)}{P(\|H\|>M_n)}\right)^{1/\alpha}\frac{z_n}{M_n}\\
&\gg&\{nP(\|H\|>M_n)\}^{-\frac{1}{\alpha(k+2)}}\frac{z_n}{M_n}\,.
\end{eqnarray*}
This shows that
$$\frac{M_n}{z_n}\gg\{nP(\|H\|>M_n)\}^{-u}$$
for some $u>0$. Also, note that
\begin{eqnarray*}
\Var(\tilde S_n^{(j)})&=&O(nz_n^2P(\|H\|>z_n))\\
&=&o(z_n M_n)\,,
\end{eqnarray*}
the last step following from the facts that $z_n\ll M_n$ and $nP(\|H\|>z_n)\ll1$.
Thus,
$$\frac{M_n}{z_n}\sinh^{-1}K_3\frac{M_nz_n}{\Var(\tilde S_n^{(j)})}\gg\{nP(\|H\|>M_n)\}^{-u}\,,$$
and hence,
\begin{eqnarray*}
\exp\left\{-K_2\frac{M_n}{z_n}\sinh^{-1}K_3\frac{M_nz_n}{\Var(\tilde S_n^{(j)})}\right\}&\ll&\exp\left\{-K_2\{nP(\|H\|>M_n)\}^{-u}\right\}\\
&\ll&\{nP(\|H\|>M_n)\}^k\,.
\end{eqnarray*}
This shows \eqref{ld.t2.eq12} and thus completes the proof.
\end{proof}

Theorem \ref{ld.t2} clearly excludes the boundary cases, {\it i.e.}, it does not give the decay rate of $P(\|S_n\|>kM_n)$ when $k$ is a positive integer.  For stating the results for the boundary case, we need some preliminaries. In view of the assumptions that $E(H)=0$ whenever $\alpha>1$ and that $H$ has a symmetric distribution when $\alpha=1$, by \cite{rvaceva:1962}, it follows that
\begin{equation}\label{ld.eq1}
B_n^{-1}\sum_{j=1}^nH_j\Longrightarrow{\mathcal L}({\mathcal V})\,,
\end{equation}
for some sequence $(B_n)$ going to infinity, and some $(\alpha\wedge2)$-stable random variable $\mathcal V$.
Note that
\begin{eqnarray*}
P\left(S_n\neq\sum_{j=1}^nH_j\right)&\le&P(\|H_j\|>M_n\mbox{ for some }1\le j\le n)\\
&\le&nP(\|H\|>M_n)\\
&\to&0\,.
\end{eqnarray*}
Thus, it follows from \eqref{ld.eq1} that
\begin{equation}\label{ld.eq4}
B_n^{-1}S_n\Longrightarrow{\mathcal L}({\mathcal V})\,.
\end{equation}

The next two results, which are the last two main results of this section, describe the behavior of the large deviation probability for the boundary cases. Specifically, Theorem \ref{ld.t12} gives the decay rate of $P(\|S_n\|>M_n)$ and Theorem \ref{ld.t13} gives the decay rate of $P(\|S_n\|>kM_n)$ for $k\ge2$.

\begin{theorem}\label{ld.t12}$($The boundary case: $k=1)$ In the soft truncation regime, for all closed set $F\subset\mathcal S$,
$$\limsup_{n\to\infty}\frac{P\left(\|S_n\|>M_n,\frac{S_n}{\|S_n\|}\in F\right)}{nP(\|H\|>M_n)}\le\Gamma_1(F)\,,$$
where,
$$\Gamma_1(A):=\int_AP(\langle x,{\mathcal V}\rangle\ge0)\sigma(dx)\,,$$
for $A\subset\mathcal S$, and ${\mathcal V}$ is as in \eqref{ld.eq1}.
If, in addition,
\begin{equation}\label{ld.t12.eq3}
\int_{\mathcal S}P(\langle x,{\mathcal V}\rangle=0)\sigma(dx)=0\,,
\end{equation}
then, as $n\longrightarrow\infty$,
$$\frac{P\left(\|S_n\|>M_n,\frac{S_n}{\|S_n\|}\in\cdot\right)}{nP(\|H\|>M_n)}\weak\Gamma_1(\cdot)$$
weakly on $\mathcal S$.
\end{theorem}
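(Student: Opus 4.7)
The key geometric picture is as follows: $\|S_n\|>M_n$ essentially requires that one of the summands, say $X_{nj}$, be itself of size close to $M_n$ (i.e., $\|H_j\|>M_n$), because the sum of the other $n-1$ summands is $O_p(B_n)\ll M_n$. The latter is too small to affect the leading order of $\|S_n\|$, but just large enough to decide, through its inner product with the direction $V_j:=H_j/\|H_j\|$ of the big summand, whether $\|S_n\|$ lies just above or just below $M_n$. I now make this precise.

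Set $p_n:=P(\|H\|>M_n)$, $I_j:=\one(\|H_j\|>M_n)$, and $N:=\sum_{j=1}^n I_j$. Since $np_n\to 0$, $P(N\ge 2)=O((np_n)^2)=o(np_n)$. On $\{N=0\}$ one has $S_n=\sum_j H_j\one(\|H_j\|\le M_n)$; a truncation-at-$z_n$ plus Prokhorov's inequality argument (Lemma \ref{ld.l1}) in the spirit of the treatment of $G_n$ in the proof of Theorem \ref{ld.t2} with $k=1$ gives $P(\|S_n\|>M_n,\,N=0)=o(np_n)$. By symmetry,
\[
P\bigl(\|S_n\|>M_n,\,S_n/\|S_n\|\in F\bigr)=n\,p_n(1-p_n)^{n-1}\,Q_n(F)+o(np_n),
\]
where $Q_n(F):=P(\|S_n\|>M_n,\,S_n/\|S_n\|\in F\mid I_1=1,\,I_j=0\ \forall j\ge 2)$.

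On the conditioning event, $X_{n1}=(M_n+L_1)V_1$ and $\tilde Y:=\sum_{j\ge 2}X_{nj}=\sum_{j\ge 2}H_j$, with $(V_1,L_1,\tilde Y)$ independent. By \eqref{ld.spectral}, $V_1\Rightarrow\mathcal U\sim\sigma$; by \eqref{ld.eq4} and the asymptotic equivalence of $S_n$ and $\sum_j H_j$ noted above it, $B_n^{-1}\tilde Y\Rightarrow\mathcal V$; and $L_1/B_n\to 0$ in probability because $L$ has a fixed distribution. Independence gives $(V_1,B_n^{-1}\tilde Y)\Rightarrow(\mathcal U,\mathcal V)$ with $\mathcal U,\mathcal V$ independent. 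Expanding and using $B_n\ll M_n$ in each of the three regimes,
\[
\frac{\|S_n\|^2-M_n^2}{M_nB_n}=\frac{(M_n+L_1)^2-M_n^2+2(M_n+L_1)\langle V_1,\tilde Y\rangle+\|\tilde Y\|^2}{M_nB_n}=2\bigl\langle V_1,\tilde Y/B_n\bigr\rangle+o_p(1),
\]
while $S_n/\|S_n\|-V_1=O_p(B_n/M_n)\to 0$ in probability.

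For the upper bound, fix $\varepsilon,\delta>0$ and let $F^\delta$ denote the closed $\delta$-thickening of $F$ in $\mathcal S$. Up to an event of conditional probability $o(1)$, the event $\{\|S_n\|>M_n,\,S_n/\|S_n\|\in F\}$ is contained in $\{V_1\in F^\delta,\,\langle V_1,\tilde Y/B_n\rangle\ge-\varepsilon\}$. Continuity of $(v,y)\mapsto(v,\langle v,y\rangle)$ together with portmanteau applied to the closed set $\{(v,y):v\in F^\delta,\langle v,y\rangle\ge-\varepsilon\}$ gives
\[
\limsup_n Q_n(F)\le\int_{F^\delta}P(\langle x,\mathcal V\rangle\ge-\varepsilon)\,\sigma(dx),
\]
and letting $\varepsilon\downarrow 0$ and then $\delta\downarrow 0$ (so that $F^\delta\downarrow F$ since $F$ is closed) together with monotone convergence produces $\Gamma_1(F)$, proving the first assertion. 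For the second assertion, the matching lower bound for open $G\subset\mathcal S$ is obtained by noting that on $\{V_1\in G^{-\delta},\,\langle V_1,\tilde Y/B_n\rangle>\varepsilon\}$ the term $2M_n\varepsilon B_n$ dominates all other contributions (the $L_1$-terms and $\|\tilde Y\|^2$ being nonnegative), hence $\|S_n\|>M_n$ holds and, eventually, $S_n/\|S_n\|\in G$; portmanteau for open sets then gives $\liminf_n Q_n(G)\ge\int_{G^{-\delta}}P(\langle x,\mathcal V\rangle>\varepsilon)\,\sigma(dx)$, and sending $\varepsilon,\delta\downarrow 0$ together with hypothesis \eqref{ld.t12.eq3} (which equates $P(\langle x,\mathcal V\rangle>0)$ with $P(\langle x,\mathcal V\rangle\ge 0)$ for $\sigma$-a.e.\ $x$) produces $\Gamma_1(G)$. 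The main obstacle throughout is the ``boundary'' $\{\langle V_1,\tilde Y\rangle=0\}$: the quadratic correction $\|\tilde Y\|^2$ is nonnegative but of strictly lower order than $\langle V_1,\tilde Y\rangle$, so it breaks the symmetry exactly at the boundary; this asymmetry is precisely why the unconditional result is only an upper bound and why the weak-convergence conclusion needs \eqref{ld.t12.eq3}.
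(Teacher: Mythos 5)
Your argument takes a genuinely different route from the paper's, and it is essentially correct apart from one step. The paper proves the upper bound for all $k\ge1$ at once via Lemma \ref{ld.l2} (the $U_n,V_n,W_n,Y_n,Z_n$ decomposition) and then, for $k=1$, obtains the lower bound through a union bound over the events $C_j:=\{\|X_{nj}\|\ge M_n,\ \sum_{i\ne j}\langle X_{ni},X_{nj}\rangle>0\}$ combined with the already-established upper bound on the total mass. You instead condition on $N:=\#\{j:\|H_j\|>M_n\}=1$ and expand $\|S_n\|^2-M_n^2$ to isolate the term $2\langle V_1,\tilde Y/B_n\rangle$; this yields the upper and lower bounds in one computation and makes the mechanism (one big jump, with the sign of $\langle V_1,\mathcal V\rangle$ deciding whether the boundary $M_n$ is crossed) completely explicit. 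The cost is that the argument is tailored to $k=1$: it does not see the alignment of atoms needed for $k\ge2$, whereas Lemma \ref{ld.l2} is re-used verbatim in Theorem \ref{ld.t13}.

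The gap is in the claim $P(\|S_n\|>M_n,\,N=0)=o(nP(\|H\|>M_n))$, which you attribute to ``truncation-at-$z_n$ plus Prokhorov.'' Prokhorov's inequality (and the treatment of $G_n$ in the proof of Theorem \ref{ld.t2}) controls $\sum_j H_j\one(\|H_j\|\le z_n)$ only. On $\{N=0\}$ the dangerous contribution is a single $H_j$ with $\|H_j\|\in\bigl((1-\eta)M_n,M_n\bigr]$: conditionally on that, $\|S_n\|>M_n$ occurs with probability bounded away from $0$, so this subevent alone has probability of exact order $nP(\|H\|>M_n)\bigl[(1-\eta)^{-\alpha}-1\bigr]$, which is \emph{not} $o(nP(\|H\|>M_n))$ for fixed $\eta$. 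You need the additional observation, from regular variation, that the prefactor $(1-\eta)^{-\alpha}-1\downarrow0$ as $\eta\downarrow0$, followed by a diagonalization; this is exactly what the event $E_n$ and the limit $\eta\downarrow0$ accomplish in the proof of Lemma \ref{ld.l2}. Once that is inserted, the rest of your argument is sound: the conditioning on $\bigcap_{j\ge2}\{\|H_j\|\le M_n\}$ changes the law of $\tilde Y=\sum_{j\ge2}H_j$ only by $o(1)$ in total variation (since $(1-P(\|H\|>M_n))^{n-1}\to1$), so $B_n^{-1}\tilde Y\Rightarrow\mathcal V$ is preserved; the expansion of $\|S_n\|^2-M_n^2$ and both portmanteau steps are correct provided you take the \emph{open} erosion $G^{-\delta}:=\{x\in\mathcal S:\|x-y\|>\delta\ \text{for all}\ y\in\mathcal S\setminus G\}$; and combining the closed-set upper bound, the open-set lower bound, and the case $F=G=\mathcal S$ gives the stated weak convergence under \eqref{ld.t12.eq3}.
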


\begin{theorem}\label{ld.t13}$($The boundary case: $k\ge2$) Suppose $k\ge2$ and assume that \eqref{ld.t2.eq2} holds. Then, in the soft truncation regime, 
$$
\limsup_{n\to\infty}\frac{P\left(\|S_n\|>kM_n,\frac{S_n}{\|S_n\|}\in F\right)}{\{nP(\|H\|>M_n)\}^k}\le\Gamma_k(F)\,,
$$
for all closed set $F\subset\mathcal S$, where for all $A\subset\mathcal S$,
$$
\Gamma_k(A):=\frac1{k!}\sum_{s\in A}P(\langle s,{\mathcal V}\rangle\ge0)\sigma(\{s\})^k\,.
$$
If, in addition, for every $s\in\mathcal S$,
\begin{equation}\label{ld.t13.eq1}
\liminf_{t\rightarrow\infty}\frac{P\left(\|H\|>t,\frac H{\|H\|}=s\right)}{P(\|H\|>t)}\ge\sigma(\{s\})
\end{equation}
and
\begin{equation}\label{ld.t13.eq3}
P(\langle s,{\mathcal V}\rangle=0)\sigma(\{s\})=0\,,
\end{equation}
then,
$$\frac{P\left(\|S_n\|>kM_n,\frac{S_n}{\|S_n\|}\in\cdot\right)}{\{nP(\|H\|>M_n)\}^k}\weak\Gamma_k(\cdot)\,,$$
weakly on $\mathcal S$.
\end{theorem}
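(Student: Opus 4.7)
The plan is to mirror the architecture of the proof of Theorem \ref{ld.t2}, but carefully handle the boundary radius $kM_n$, where the geometry forces the $k$ ``large'' summands to collapse to a common direction.

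First, as in Theorem \ref{ld.t2}, I reduce to the event that exactly $k$ of the $H_j$ satisfy $\|H_j\|>M_n$. The case of more than $k$ large summands is trivially of order $\{nP(\|H\|>M_n)\}^{k+1}$. The case of $l<k$ large summands, combined with $\|S_n\|>kM_n$, forces some $L_{j_u}>cM_n$ for a positive constant $c$ (since the sum $T_n$ of the small summands is of smaller order than $M_n$ in probability, and each large summand contributes at most $M_n+L_{j_u}$ in norm); by \eqref{ld.t2.eq2} this contributes $o(\{nP(\|H\|>M_n)\}^k)$, exactly as in the bound on $D_n$ in the previous proof. The event $\{\|T_n\|>\epsilon M_n\}$ is handled by the same argument used for $G_n$ there.

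Next, with exactly $k$ large summands at indices $j_1<\cdots<j_k$, write $S_n=\sum_{u=1}^k(M_n+L_{j_u})s_u+T_n$ where $s_u:=H_{j_u}/\|H_{j_u}\|$. Conditionally on the large indices, the $s_u$ are i.i.d.\ from the law of $H/\|H\|$ given $\|H\|>M_n$---which tends weakly to $\sigma$ by \eqref{ld.spectral}---and independent of $T_n$, which by \eqref{ld.eq4} satisfies $T_n/B_n\Longrightarrow\mathcal V$ with $B_n\ll M_n$. Since $\|(M_n+L_{j_u})s_u\|=M_n+L_{j_u}$ and both $L_{j_u}/M_n$ and $\|T_n\|/M_n$ tend to zero in probability, the triangle inequality applied to $\|S_n\|>kM_n$ forces $\|\sum_{u=1}^k s_u\|\to k$ in probability. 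Expanding $\|\sum s_u\|^2$ shows $\sum_{u<v}(1-\langle s_u,s_v\rangle)\to 0$, so the $k$ directions collapse to a common value; combining with weak convergence of the angular law to $\sigma$, only atoms of $\sigma$ contribute asymptotically. Turning this qualitative collapse into a uniform quantitative statement, via partitioning $\mathcal S$ into small cells shrinking to each atom and applying the Portmanteau theorem to each cell, is the main technical hurdle.

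Given $s_1=\cdots=s_k=s$ for an atom $s\in\mathcal S$, the Pythagorean decomposition $T_n=\langle T_n,s\rangle s+T_n^\perp$ yields $\|S_n\|^2=(kM_n+\sum L_{j_u}+\langle T_n,s\rangle)^2+\|T_n^\perp\|^2$, so by Taylor expansion $\|S_n\|>kM_n$ is asymptotically equivalent to $\langle T_n,s\rangle+\sum L_{j_u}\ge0$: the transversal correction $\|T_n^\perp\|^2/(2kM_n)$ is of order $B_n^2/M_n$ in probability, which is negligible compared to $\langle T_n,s\rangle$ (of order $B_n$) because $B_n\ll M_n$. A short tail computation from \eqref{ld.t2.eq2} shows $\sum L_{j_u}/B_n\to 0$ in probability, so the conditional probability converges to $P(\langle s,\mathcal V\rangle\ge 0)$ by \eqref{ld.t13.eq3}. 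Since $S_n/\|S_n\|\to s$ on this event, multiplying by the combinatorial prefactor $\binom{n}{k}\sim n^k/k!$ and the direction probability $\sigma(\{s\})^k$ and summing over atoms $s\in F$ yields the upper bound $\Gamma_k(F)$. For the weak convergence under the additional hypotheses, \eqref{ld.t13.eq1} guarantees $P(H/\|H\|=s\mid\|H\|>M_n)\ge\sigma(\{s\})+o(1)$ for each atom $s$, which by independence across the $k$ large summands and a reapplication of the analysis above provides the matching liminf.
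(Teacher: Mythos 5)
Your plan tracks the paper's argument closely in outline, but the upper bound is where the real work lives, and you have explicitly left it unfinished. The paper isolates exactly the step you call ``the main technical hurdle'' into Lemma \ref{ld.l2}: making quantitative and uniform the collapse of the $k$ large directions onto a common atom of $\sigma$. There, the event $\{\|S_n\|>kM_n,\ S_n/\|S_n\|\in F\}$ intersected with ``some $k$-tuple has norm exceeding $(k-\eta)M_n$'' is split into five carefully designed pieces ($U_n,V_n,W_n,Y_n,Z_n$); $U_n$ carries the main contribution and is handled via the joint vague/weak convergence of $P_n\times P(B_n^{-1}\sum_{j>k}X_{nj}\in\cdot)$ to $\tilde\nu^{(k)}\times\mathcal L(\mathcal V)$, while $V_n$ through $Z_n$ control the geometric error terms (the transversal component of the remainder sum, the mismatch between the direction of $\sum_{j\le k}X_{nj}$ and of $S_n$, and the events where some $L_{j_u}$ is abnormally large) and are shown to be $o(P(\|H\|>M_n)^k)$. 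Your Pythagorean/Taylor heuristic is the right intuition for $U_n$, but you do not carry out the decomposition or show the error events are negligible, so the upper bound is not established. Note also that the observation ``$\sum L_{j_u}/B_n\to 0$ in probability'' needs no input from \eqref{ld.t2.eq2} --- it is immediate from $B_n\to\infty$ and $L$ having a fixed law; what \eqref{ld.t2.eq2} actually buys is the negligibility of the events $Y_n$ and $Z_n$ in Lemma \ref{ld.l2}.

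Your lower-bound sketch is in the spirit of the paper's: the paper takes finitely many atoms $s_1,\ldots,s_r$, defines for each $k$-tuple $j_1<\cdots<j_k$ the event that all $H_{j_u}$ exceed $M_n$ and have direction exactly $s_i$, intersected with $\sum_{v\neq j_1,\ldots,j_k}\langle s_i,X_{nv}\rangle>0$, then applies \eqref{ld.t13.eq1}, \eqref{ld.t13.eq3} and \eqref{ld.eq4}, and finishes with an inclusion-exclusion estimate as in \eqref{ld.t2.eq13}. Your version uses the same ingredients, so modulo filling the hole in the upper bound by reproducing Lemma \ref{ld.l2} (or citing it), you are recovering the same proof rather than a genuinely different route.
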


Before getting into the proof, let us try to understand the need for the assumption \eqref{ld.t13.eq1} when $k\ge2$. Continuing on the note of the heuristic arguments after the statement of Theorem \ref{ld.t2}, one would expect that for $\|S_n\|$ to be at least as large as $kM_n$, it would be ``necessary'' for the sum of some $k$ many of $X_{n1}\ldots,X_{nn}$ to have norm at least $kM_n$. For that to happen when $k\ge2$, one would need that the directions of each of those $k$ summands to be the same. Given any direction $s$, this is possible only when the spectral measure admits an atom at $\{s\}$, and \eqref{ld.t13.eq1} holds. This clearly isn't true for $k=1$, in which case, the sum of $k$ random variables is actually the random variable itself, and the norm of a particular $X_{nj}$ being at least as large as $M_n$ is equivalent to $\|H_j\|\ge M_n$.

It is easy to see that for all $k\ge1$, $\Gamma_k({\mathcal S})\le\sigma({\mathcal S})=1$, which in particular implies that $\Gamma_k$ is a finite measure. However, $\Gamma_k$ might be the null measure, and if that is the case, the statements  of Theorems \ref{ld.t12} and \ref{ld.t13} just mean that $P(\|S_n\|>kM_n)$ decays faster than $\{nP(\|H\|>M_n)\}^k$. For the proofs, we shall need the following lemma, which in fact, proves the first parts of both theorems.

\begin{lemma}\label{ld.l2} Suppose $k\ge1$ and assume that  \eqref{ld.t2.eq2} holds. Then, as $n\longrightarrow\infty$,
$$
\limsup_{n\to\infty}\frac{P\left(\|S_n\|>kM_n,\frac{S_n}{\|S_n\|}\in F\right)}{\{nP(\|H\|>M_n)\}^k}\le\Gamma_k(F)
\,,
$$
for all closed set $F\subset\mathcal S$.
\end{lemma}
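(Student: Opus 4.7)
The argument extends the proof of Theorem \ref{ld.t2} to the boundary case $r=k$, treating $k=1$ and $k\ge 2$ with a common decomposition but different direction arguments. Following that proof, I would introduce an auxiliary threshold $z_n\ll M_n$ (as in \eqref{ld.t2.eq3}--\eqref{ld.t2.eq6}), fix small $\eta,\epsilon>0$, and set $F^\eta:=\{x\in\mathcal S:d(x,F)<\eta\}$. Decompose the event $\{\|S_n\|>kM_n,\,S_n/\|S_n\|\in F\}$ by the number of ``large'' summands ($\|X_{nj}\|>(1-\epsilon)M_n$), the number of ``medium'' summands ($z_n<\|X_{nj}\|\le(1-\epsilon)M_n$), and the truncated sum of the ``small'' ones ($\|X_{nj}\|\le z_n$). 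The same estimates used in the proof of Theorem \ref{ld.t2}---too many medium-or-large summands, the truncated small sum exceeding $\epsilon M_n$, and fewer than $k$ large summands but $\|S_n\|>kM_n$ (using \eqref{ld.t2.eq2})---each contribute $o(\{nP(\|H\|>M_n)\}^k)$. Only the event $E_n$ where exactly $k$ summands are large and $S_n/\|S_n\|\in F^\eta$ remains.

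On $E_n$ with large indices $j_1<\ldots<j_k$, write $S_n=\sum_{u=1}^k X_{n,j_u}+R_n$ where $R_n$ sums over the other $n-k$ indices. Since deleting $k$ summands does not affect the stable CLT \eqref{ld.eq4}, $R_n/B_n\Longrightarrow\mathcal V$, and $B_n\ll M_n$ in every regime of \eqref{ld.eq5}. Setting $s_u:=H_{j_u}/\|H_{j_u}\|$ and $Y_u:=\|X_{n,j_u}\|-M_n$, the triangle inequality $\|\sum_u X_{n,j_u}\|\le kM_n+\sum_u L_{j_u}$ combined with $\|R_n\|=O_P(B_n)=o_P(M_n)$ forces $s_1,\ldots,s_k$ to cluster within $o(1)$ of a common unit vector $s$. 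Expanding $\|S_n\|^2$ and retaining only leading-order terms, $\|S_n\|>kM_n$ reduces asymptotically to $\sum_u Y_u+\langle s,R_n\rangle>0$, and $S_n/\|S_n\|\in F$ reduces to $s\in F^\eta$.

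For $k\ge 2$: the unit vectors $s_u$ are conditionally i.i.d.\ with law $\sigma$ by \eqref{ld.spectral}, so $P(s_u\in B_\delta(s)\text{ for all }u)$ is of order $\sigma(B_\delta(s))^k$, which tends to $0$ as $\delta\downarrow 0$ unless $s$ is an atom of $\sigma$. Combining $\binom{n}{k}$ index choices, the joint asymptotics from Lemma \ref{ld.l12}, and the bound $\limsup_n P(\sum_u Y_u+\langle s,R_n\rangle>0)\le P(\langle s,\mathcal V\rangle\ge 0)$ yields the upper bound $\frac1{k!}\{nP(\|H\|>M_n)\}^k\sum_{s\in F^\eta}\sigma(\{s\})^k P(\langle s,\mathcal V\rangle\ge 0)$. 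Letting $\eta\downarrow 0$ discards the atoms of $F^\eta\setminus F$ (finiteness of $\sigma$ ensures only finitely many atoms have mass above any threshold), giving $\Gamma_k(F)$. For $k=1$: no alignment is required, the single direction is drawn from $\sigma$, and an analogous conditional-probability argument produces $nP(\|H\|>M_n)\int_{F^\eta} P(\langle s,\mathcal V\rangle\ge 0)\sigma(ds)$, whose $\eta\downarrow 0$ limit is $\Gamma_1(F)$ by continuity of the finite measure $\sigma$.

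The principal obstacle is the final conditional-probability limit within $E_n$: justifying $\limsup_n P(\sum_u Y_u+\langle s,R_n\rangle>0)\le P(\langle s,\mathcal V\rangle\ge 0)$. The shift $Y_u$ equals $L_{j_u}\ge 0$ when $\|H_{j_u\|}>M_n$ (``extruded'' subcase) and lies in $(-\epsilon M_n,0]$ when $\|H_{j_u}\|\in((1-\epsilon)M_n,M_n]$ (``truncated'' subcase); by regular variation, the truncated subcase occupies a fraction $1-(1-\epsilon)^\alpha$ of the large events, which vanishes as $\epsilon\downarrow 0$ and hence contributes negligibly to the upper bound after the $\epsilon\downarrow 0$ limit. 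On the dominant all-extruded subcase, $Y_u=L_{j_u}\ge 0$ satisfies $Y_u/B_n\to 0$ (since $L$ is tight and $B_n\to\infty$), so $(\sum_u Y_u+\langle s,R_n\rangle)/B_n\Longrightarrow\langle s,\mathcal V\rangle$ and Portmanteau applied to the closed half-space $\{y\ge 0\}$ delivers the required upper bound. The passage from $F^\eta$ back to closed $F$ in the atomic sum (for $k\ge 2$) or the integral (for $k=1$) is then routine by continuity of $\sigma$.
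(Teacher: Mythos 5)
Your proposal captures the essential strategy of the paper's proof --- isolate $k$ dominant summands, feed in the joint weak convergence of the dominant block and the remainder, and for $k\ge2$ extract atomicity of $\sigma$ --- but it diverges technically in a way that leaves genuine gaps.

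The paper's argument is measure-theoretic rather than conditional. After reducing (via the analogue of Theorem \ref{ld.t2}) to the event that some $k$-tuple has $\left\|\sum_{u=1}^k X_{nj_u}\right\|>(k-\eta)M_n$, it splits the intersection of that event with $\{\|S_n\|>kM_n,\,S_n/\|S_n\|\in F\}$ into five explicit sub-events $U_n,V_n,W_n,Y_n,Z_n$. The main term $U_n$ is bounded in a single stroke: since $\sum_{j\le k}X_{nj}$ and $\sum_{j>k}X_{nj}$ are independent, Lemma \ref{ld.l12} and \eqref{ld.eq4} give joint weak convergence of the (scaled) product law to $\tilde\nu^{(k)}(dx)\,P(\mathcal V\in dy)$, and the Portmanteau theorem applied to the closed set $\{\|x\|\ge k-\eta,\,x/\|x\|\in F^\eta,\,\langle x,y\rangle\ge-\eta\}$ gives the limsup. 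Letting $\eta\downarrow0$ and observing that $\nu^{(k)}$ can only charge $\{\|x\|\ge k\}$ on the set $\{ks: s\text{ an atom of }\sigma\}$ (because $\|\sum_{j=1}^k x_j\|=k$ with $\|x_j\|\le1$ forces $x_1=\cdots=x_k\in\mathcal S$) produces $\Gamma_k(F)$ directly. No conditional probability ever appears, and the atomicity is automatic from the geometry of $\nu^{(k)}$, not from a clustering argument.

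Your version replaces this with a conditional argument: condition on which $k$ indices are large, declare their directions conditionally i.i.d.\ with law $\sigma$, argue clustering, and then bound $\limsup_n P(\sum_u Y_u+\langle s,R_n\rangle>0)\le P(\langle s,\mathcal V\rangle\ge0)$. This is where the proposal becomes shaky. The limiting direction $s$ is itself random and built from the same data as the radii and the $Y_u$'s, so the Portmanteau step cannot be applied pointwise in $s$ and then integrated without a uniformity argument, and you do not supply it. You also do not address the analogues of the paper's error events: $V_n$ (the quadratic expansion of $\|S_n\|^2$ showing that a large negative cross-term cannot be compensated), $W_n$ (the direction of $S_n$ cannot be far from the direction of the dominant block), and $Z_n$ ($\|\sum_{j\le k}X_{nj}\|$ exceeding $\sqrt{k^2+\epsilon_n}M_n$ forces a large $L$). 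Your ``truncated vs.\ extruded'' split and the ``$\epsilon\downarrow0$ washes it out'' argument is a heuristic substitute for the paper's $Y_n$, but it is not quantified, and the passage from ``directions cluster near a common unit vector'' to the exact expression $\frac{1}{k!}\sum_{s\in F}\sigma(\{s\})^k P(\langle s,\mathcal V\rangle\ge0)$ is asserted rather than derived. In short: right strategy and correct identification of the obstacle, but the conditional route you chose makes the obstacle harder, and the resolution you sketch does not close it; the paper's joint-law-plus-Portmanteau route dissolves the problem before it arises.
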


\begin{proof} It is easy to see that for all $k\ge1$ and $A\subset\mathcal S$,
$$\Gamma_k(A)=\frac1{k!}\int_{{\mathcal S}}\ldots\int_{{\mathcal S}}\one\left(\left\|\sum_{j=1}^kx_j\right\|=k,\frac{\sum_{j=1}^kx_j}{\|\sum_{j=1}^kx_j\|}\in A\right)P\left(\sum_{j=1}^k\langle x_j,{\mathcal V}\rangle\ge0\right)$$
$$
\sigma(dx_1)\ldots\sigma(dx_k)\,.$$
Fix $k\ge1$ and a closed set $F\subset\mathcal S$. Let $0<\eta<1$ and define
$$
E_n:=
\Biggl\{\left\|\sum_{u=1}^kX_{nj_u}\right\|>(k-\eta)M_n\mbox{ for at least one tuple }
$$
$$
\,\,\,\,\,\,\,\,
1\le j_1<j_2<\ldots<j_k\le n\Biggr\}\,.
$$
By similar arguments as in the proof of Theorem \ref{ld.t2}, it follows that
$$P\left(\{\|S_n\|>kM_n\}\cap E_n^c\right)=o(\{nP(\|H\|>M_n)\}^k)$$
as $n\longrightarrow\infty$. Thus, for the upper bound, it suffices to show that
$$\limsup_{\eta\downarrow0}\limsup_{n\rightarrow\infty}\frac{P\left(\left\{\|S_n\|>kM_n,\frac{S_n}{\|S_n\|}\in F\right\}\cap E_n\right)}{\{nP(\|H\|>M_n)\}^k}$$
$$\le\frac1{k!}\int_{\mathcal S}\ldots\int_{\mathcal S}\one\left(\|\sum_{j=1}^kx_j\|=k,\frac{\sum_{j=1}^kx_j}{\|\sum_{j=1}^kx_j\|}\in F\right)P\left(\sum_{j=1}^k\langle x_j,{\mathcal V}\rangle\ge0\right)$$
$$\sigma(dx_1)\ldots\sigma(dx_k)\,.$$
and for that it suffices to show
$$\limsup_{\eta\downarrow0}\limsup_{n\rightarrow\infty}\frac{P\left(\|S_n\|>kM_n,\frac{S_n}{\|S_n\|}\in F,\|\sum_{j=1}^kX_{nj}\|>(k-\eta)M_n\right)}{P(\|H\|>M_n)^k}$$
$$\le\int_{\mathcal S}\ldots\int_{\mathcal
S}\one\left(\|\sum_{j=1}^kx_j\|=k,\frac{\sum_{j=1}^kx_j}{\|\sum_{j=1}^kx_j\|}\in
F\right)P\left(\sum_{j=1}^k\langle
x_j,{\mathcal V}\rangle\ge0\right)$$
\begin{equation}\label{ld.l13.eq1}
\sigma(dx_1)\ldots\sigma(dx_k)\,.
\end{equation}
Fix a sequence $\epsilon_n$ satisfying $M_n^{-1}\ll\epsilon_n\ll M_n^{-1}B_n$, which is possible because $B_n$ goes to infinity, where $B_n$ is as in \eqref{ld.eq1}. Also  $B_n=O(b_n)=o(M_n)$, where $b_n$ is as defined in \eqref{ld.eq5}, thus showing that $\epsilon_n$ goes to zero as $n$ goes to infinity. Set
$$F^\eta:=\{x\in{\mathcal S}:\|x-s\|\le\eta\mbox{ for some }s\in F\}\,.$$
Define the events
\begin{eqnarray*}
U_n&:=
&\left\{\|\sum_{j=1}^kX_{nj}\|>(k-\eta)M_n,\frac{\sum_{j=1}^kX_{nj}}{\|\sum_{j=1}^kX_{nj}\|}\in F^\eta,\right.\\
&&\left.\left\langle\frac{\sum_{j=1}^kX_{nj}}{\|\sum_{j=1}^kX_{nj}\|},B_n^{-1}\sum_{j=k+1}^nX_{nj}\right\rangle\ge-\eta\right\}\,,\\
V_n&:=&\left\{k-\eta<M_n^{-1}{\|\sum_{j=1}^kX_{nj}\|}\le\sqrt{k^2+\epsilon_n},\|S_n\|>kM_n,\right.\\
&&\left.\left\langle\frac{\sum_{j=1}^kX_{nj}}{\|\sum_{j=1}^kX_{nj}\|},B_n^{-1}\sum_{j=k+1}^nX_{nj}\right\rangle<-\eta\right\}\,,\\
W_n&:=
&\Biggl\{\|\sum_{j=1}^kX_{nj}\|>(k-\eta)M_n,\|S_n\|>M_n,\frac{\sum_{j=1}^kX_{nj}}{\|\sum_{j=1}^kX_{nj}\|}\notin F^\eta,\\
&&\frac{S_n}{\|S_n\|}\in F\Biggr\}\,,\\
Y_n&:=
&\left\{\|\sum_{j=1}^kX_{nj}\|>(k-\eta)M_n,\min_{1\le j\le k}\|X_{nj}\|<\frac{1-\eta}2M_n\right\}\,,\\
Z_n&:=&\left\{\min_{1\le j\le
k}\|X_{nj}\|\ge\frac{1-\eta}2M_n,\|\sum_{j=1}^kX_{nj}\|>\sqrt{k^2+\epsilon_n}M_n\right\}\,.
\end{eqnarray*}
Note that
$$\left\{\|S_n\|>kM_n,\frac{S_n}{\|S_n\|}\in F,\|\sum_{j=1}^kX_{nj}\|>(k-\eta)M_n\right\}\subset U_n\cup V_n\cup W_n\cup Y_n\cup Z_n\,.$$
Let $k-1<r<k-\eta$ be such that
$$\nu^{(k)}\left(\{x\in\bbr^d:\|x\|=r\}\right)=0\,.$$
For $n\ge1$, let $P_n(\cdot)$ and $\tilde\nu^{(k)}$ denote the restrictions of\\ $P\left(M_n^{-1}\sum_{j=1}^kX_{nj}\in\cdot\right)$ and $\nu^{(k)}$ respectively to $\bbr^d\setminus B_r$, {\it i.e.}, for $A\subset\bbr^d$,
\begin{eqnarray*}
P_n(A)&:=&P\left(M_n^{-1}\sum_{j=1}^kX_{nj}\in A\cap B_r^c\right)\,,\\
\tilde\nu^{(k)}(A)&:=&\nu^{(k)}\left( A\cap B_r^c\right)\,.
\end{eqnarray*}
Then, by Lemma \ref{ld.l12}, it follows that
$$\frac{P_n(\cdot)}{P(\|H\|>M_n)^k}\weak\tilde\nu^{(k)}(\cdot)\,.$$
By \eqref{ld.eq4}, it follows that
$$\frac{P_n(dx)}{P(\|H\|>M_n)^k}P\left(B_n^{-1}\sum_{j=k+1}^nX_{nj}\in dy\right)\weak\tilde\nu^{(k)}(dx)P({\mathcal V}\in dy)$$
on $\bbr^d\times\bbr^d$. Note that
\begin{eqnarray*}
&&P(U_n)\\
&=&\int_{\bbr^d}\int_{\bbr^d}\one\left(\|x\|>k-\eta,\frac x{\|x\|}\in
F^\eta\right)\one(\langle
x,y\rangle\ge-\eta)P_n(dx)\\
&&P\left(B_n^{-1}\sum_{j=k+1}^nX_{nj}\in dy\right)\,.
\end{eqnarray*}
Since $F^\eta$ is a closed set,
\begin{eqnarray*}
&&\limsup_{n\rightarrow\infty}\frac{P(U_n)}{P(\|H\|>M_n)^k}\\
&\le&\int\one\left(\|x\|\ge k-\eta,\frac x{\|x\|}\in
F^\eta\right)P(\langle x,{\mathcal V}\rangle\ge-\eta)\tilde\nu^{(k)}(dx)\\
&=&\int\one\left(\|x\|\ge k-\eta,\frac x{\|x\|}\in
F^\eta\right)P(\langle x,{\mathcal V}\rangle\ge-\eta)\nu^{(k)}(dx)\,.
\end{eqnarray*}
Letting $\eta\downarrow0$, we get using the fact that $F$ is a closed set,
\begin{eqnarray*}
&&\limsup_{\eta\downarrow0}\limsup_{n\rightarrow\infty}\frac{P(U_n)}{P(\|H\|>M_n)^k}\\
&\le&\int_{\bbr^d}\one\left(\|x\|\ge k,\frac x{\|x\|}\in F\right)P(\langle x,{\mathcal V}\rangle\ge0)\nu^{(k)}(dx)\\
&=&\int_{\bbr^d}\ldots\int_{\bbr^d}\one\left(\|\sum_{j=1}^kx_j\|\ge k,\frac{\sum_{j=1}^kx_j}{\|\sum_{j=1}^kx_j\|}\in F\right)P\left(\sum_{j=1}^k\langle x_j,{\mathcal V}\rangle\ge0\right)\\
&&\nu(dx_1)\ldots\nu(dx_k)\\
&=&\int_{\mathcal S}\ldots\int_{\mathcal
S}\one\left(\left\|\sum_{j=1}^kx_j\right\|=k,\frac{\sum_{j=1}^kx_j}{\|\sum_{j=1}^kx_j\|}\in
F\right)P\left(\sum_{j=1}^k\langle
x_j,{\mathcal V}\rangle\ge0\right)\\
&&\sigma(dx_1)\ldots\sigma(dx_k)\,,
\end{eqnarray*}
the last equality being true because $\nu(B_1^c)=0$ and the restriction of $\nu$ to $\mathcal S$ is $\sigma$.
Thus, in order to show (\ref{ld.l13.eq1}), all that remains is to
prove that
$$P(V_n)+P(W_n)+P(Y_n)+P(Z_n)\ll P(\|H\|>M_n)^k\,.$$
Note that on the set $V_n$,
\begin{eqnarray*}
k^2M_n^2&<&\|S_n\|^2\\
&=&\left\|\sum_{j=1}^k X_{nj}\right\|^2+\left\|\sum_{j=k+1}^n X_{nj}\right\|^2+2\left\langle\sum_{j=1}^k X_{nj},\sum_{j=k+1}^n X_{nj}\right\rangle\\
&\le&(k^2+\epsilon_n)M_n^2+\left\|\sum_{j=k+1}^n X_{nj}\right\|^2-2B_n\eta\left\|\sum_{j=1}^k X_{nj}\right\|\\
&\le&(k^2+\epsilon_n)M_n^2+\left\|\sum_{j=k+1}^n X_{nj}\right\|^2-2\eta(k-\eta)B_nM_n\,,
\end{eqnarray*}
and hence,
\begin{eqnarray*}
&&P(V_n)\\
&\le&P\left(\left\|\sum_{j=1}^kX_{nj}\right\|\ge(k-\eta)M_n\right)\\
&&\times P\left(\left\|\sum_{j=k+1}^nX_{nj}\right\|^2>2\eta(k-\eta)B_nM_n-\epsilon_nM_n^2\right)\\
&\ll&P(\|H\|>M_n)^k\,,
\end{eqnarray*}
the last step following from the fact that  by the choice of $\epsilon_n$, $\epsilon_nM_n^2+B_n^2=o(B_nM_n)$ showing that $2\eta(k-\eta)B_nM_n-\epsilon_nM_n^2$ is much larger than $B_n^2$ which is the growth rate of $\left\|\sum_{j=k+1}^nX_{nj}\right\|^2$. Since for any $u,v\in\bbr^d$,
\begin{eqnarray*}
\left\|\frac{u+v}{\|u+v\|}-\frac u{\|u\|}\right\|
&\le&\left\|\frac{u+v}{\|u+v\|}-\frac u{\|u+v\|}\right\|+\left\|\frac u{\|u+v\|}-\frac u{\|u\|}\right\|\\
&=&\frac{\|v\|}{\|u+v\|}+\left|\frac{\|u+v\|-\|u\|}{\|u+v\|}\right|\\
&\le&2\frac{\|v\|}{\|u+v\|}\,,
\end{eqnarray*}
it follows that
\begin{eqnarray*}
P(W_n)&\le&P\left(\left\|\sum_{j=1}^kX_{nj}\right\|\ge(k-\eta)M_n\right)P\left(\left\|\sum_{j=k+1}^nX_{nj}\right\|>\frac\eta2M_n\right)\\
&\ll&P(\|H\|>M_n)^k\,.
\end{eqnarray*}
Clearly,
\begin{eqnarray*}
P(Y_n)&\le&\sum_{j=1}^kP\left(\|X_{nj}\|>\frac{2k-1-\eta}{2(k-1)}M_n\right)\\
&\le&kP(\|H\|>M_n)P\left(L>\frac{1-\eta}{2(k-1)}M_n\right)\\
&\ll&P(\|H\|>M_n)^k\,,
\end{eqnarray*}
the last step following by \eqref{ld.t2.eq2}.
Finally,
\begin{eqnarray*}
P(Z_n)&\le&kP\left(\|H\|>\frac{1-\eta}2M_n\right)^kP\left(L>\left(\frac{\sqrt{k^2+\epsilon_n}}k-1\right)M_n\right)\\
&\ll&P(\|H\|>M_n)^k\,,
\end{eqnarray*}
the last step being true because by the choice of $\epsilon_n$, it follows that
\begin{eqnarray*}
1&\ll&\epsilon_nM_n\\
&=&O\left(\left(\frac{\sqrt{k^2+\epsilon_n}}k-1\right)M_n\right)\,.
\end{eqnarray*}
This completes the proof.
\end{proof}

\begin{proof}[Proof of Theorem \ref{ld.t12}] In view of Lemma \ref{ld.l2}, it suffices to show that
\begin{equation}\label{ld.t12.eq1}
\liminf_{n\to\infty}\frac{P(\|S_n\|>M_n)}{nP(\|H\|>M_n)}\ge\Gamma_1({\mathcal S})\,.
\end{equation}
We assume without loss of generality that $\Gamma_1({\mathcal S})>0$.
For $1\le j\le n$, define
$$C_j:=\left\{\|X_{nj}\|\ge M_n,\sum_{1\le i\le n, i\neq j}\langle X_{ni},X_{nj}\rangle>0\right\}\,.$$
Note that
\begin{equation}\label{ld.t12.eq2}
P(\|S_n\|>M_n)\ge P\left(\bigcup_{j=1}^n C_j\right)\,,
\end{equation}
and that
\begin{eqnarray}
P(C_j)&=&\int_{\mathcal S}\int_{\bbr^d}\one(\langle x,y\rangle>0)P\left(\|X_{n1}\|\ge M_n,\frac{X_{n1}}{\|X_{n1}\|}\in dx\right)\nonumber\\
&&\,\,\,\,\,\,\,P\left(B_n^{-1}\sum_{j=2}^nX_{nj}\in dy\right)\nonumber\\
&=&\int_{\mathcal S}\int_{\bbr^d}\one(\langle x,y\rangle>0)P\left(\|H\|\ge M_n,\frac{H}{\|H\|}\in dx\right)\nonumber\\
&&\,\,\,\,\,\,\,P\left(B_n^{-1}\sum_{j=2}^nX_{nj}\in dy\right)\nonumber
\end{eqnarray}
By  \eqref{ld.spectral} and \eqref{ld.eq4}, it follows that
\begin{eqnarray}
\liminf_{n\to\infty}\frac{P(C_j)}{P(\|H\|>M_n)}&\ge&\int_{\mathcal S}\int_{\bbr^d}\one(\langle x,y\rangle>0)\sigma(dx)P\left({\mathcal V}\in dy\right)\nonumber\\
&=&\Gamma_1({\mathcal S})\,,\label{ld.t12.eq4}
\end{eqnarray}
the equality in the last line following from \eqref{ld.t12.eq3}. In view of \eqref{ld.t12.eq2} and \eqref{ld.t12.eq4}, all that needs to be shown is that
$$n^2P(C_1\cap C_2)=o(nP(\|H\|>M_n))\,,$$
but that follows from similar arguments as in the proof of Theorem \ref{ld.t2}. This completes the proof.
\end{proof}

\begin{proof}[Proof of Theorem \ref{ld.t13}] In view of Lemma \ref{ld.l2}, it suffices to show that if \eqref{ld.t13.eq1} and \eqref{ld.t13.eq3} hold, then for $k\ge2$ and $s_1,\ldots,s_r\in\mathcal S$,
\begin{equation}\label{ld.t13.eq2}
\liminf_{n\to\infty}\frac{P(\|S_n\|>M_n)}{\{nP(\|H\|>M_n)\}^k}\ge\frac1{k!}\sum_{i=1}^rP(\langle s_i,{\mathcal V}\rangle\ge0)\sigma(\{s_i\})^k\,.
\end{equation}
Denote for $1\le j_1<\ldots<j_k\le n$,
$$C_{j_1\ldots j_k}:=\bigcup_{i=1}^r\left\{\|H_{j_u}\|\ge M_n,\frac{H_{j_u}}{\|H_{j_u}\|}=s_i\mbox{ for }1\le u\le k,\sum_{v\neq j_1,\ldots,j_k}\langle s_i,X_{nv}\rangle>0\right\}\,.$$
Note that,
$$
P(\|S_n\|>k M_n)\ge
P\left(\bigcup C_{j_1\ldots j_k}\right)\,,
$$
where the union is taken over all tuples $1\le j_1<\ldots<j_k\le n$. It follows by \eqref{ld.t13.eq1} and \eqref{ld.t13.eq3} that for any $1\le j_1<\ldots<j_k\le n$ and $1\le i\le r$,
$$
\liminf_{n\to\infty}\frac{P\left(\|H_{j_u}\|\ge M_n,\frac{H_{j_u}}{\|H_{j_u}\|}=s_i\mbox{ for }1\le u\le k,\sum_{v\neq j_1,\ldots,j_k}\langle s_i,X_{nv}\rangle>0\right)}{P(\|H\|>M_n)^k}
$$
$$
\ge\sigma(\{s_i\})^kP(\langle s_i,{\mathcal V}\rangle\ge0)\,,
$$
and hence for $1\le j_1<\ldots<j_k\le n$,
$$
\liminf_{n\to\infty}\frac{P\left(C_{j_1\ldots j_k}\right)}{P(\|H\|>M_n)^k}\ge\sum_{i=1}^r\sigma(\{s_i\})^kP(\langle s_i,{\mathcal V}\rangle\ge0)\,.
$$
Thus, in order to show \eqref{ld.t13.eq2}, it suffices to prove that as $n\longrightarrow\infty$,
$$
P\left(\bigcup C_{j_1\ldots j_k}\right)\sim\sum P\left(C_{j_1\ldots j_k}\right)\,,
$$
where the sum and the union are taken over all tuples $1\le j_1<\ldots<j_k\le n$. That follows from similar arguments leading to the proof of \eqref{ld.t2.eq13}. This completes the proof.
\end{proof}

\section{Large deviations: the hard truncation regime}\label{sec:ld.ht}The setup for this section is similar to that in Section \ref{sec:ld.st}, except that now we are in the hard truncation regime. That is,  $H$ is a $\bbr^d$-valued random variable such that \eqref{ld.defn} holds.
If $\alpha=1$, then $H$ is assumed to have a symmetric law and if $\alpha>1$, then $EH=0$. 

For this section, we assume that $M_n$ goes to $\infty$ slowly enough so that
\begin{equation}\label{hard}
\lim_{n\to\infty}nP(\|H\|>M_n)=\infty\,,
\end{equation}
an equivalent formulation of which is
\begin{equation}\label{ht}
1\ll M_n\ll a_n\,,
\end{equation}
where $a_n$ is same as the one in \eqref{ld.defn}.
Moreover, we assume that
$$
E\|H\|^2<\infty\mbox{ if }\alpha=2\,.
$$
We further assume that $Ee^{\epsilon L}<\infty$ for some $\epsilon>0$.

A sequence of random variables $Z_n$ follows the Large Deviations Principle (LDP) with speed $c_n$ and rate function $I$ if for any Borel set
 $A$,
$$-\inf_{x\in int(A)}I(x)\le\liminf_{n\rightarrow\infty}\frac1{c_n}\log P(Z_n\in A)$$
$$\le\limsup_{n\rightarrow\infty}\frac1{c_n}\log P(Z_n\in A)\le-\inf_{x\in cl(A)}I(x)\,,$$
where $int(\cdot)$ and $cl(\cdot)$ denote the interior and the closure of a set respectively, as before.

The first result of this section is an analogue of Cram\'er's
Theorem (Theorem 2.2.3, page 27 in \cite{dembo:zeitouni:1998})
because of the following reason. Recall that Cram\'er's Theorem
gives the LDP for $n^{-1}\sum_{i=1}^nZ_i$ where $Z_1,Z_2,\ldots$
are i.i.d. random variables with finite exponential moments. Note
that the normalizing constant is $n$, the rate at which
$E\sum_{i=1}^n\|Z_i\|$ grows. The following result gives the LDP
for the sequence $S_n/\{nM_nP(\|H\|>M_n)\}$. By Karamata's
Theorem, it is easy to see that if $\alpha<1$,
$$E\sum_{i=1}^n\biggl\|H_i\one\left(\|H_i\|\le  M_n\right)+\frac{H_i}{\|H_i\|}(M_n+L_i)\one\left(\|H_i\|>M_n\right)\biggr\|$$
grows like $nM_nP(\|H\|>M_n)$ up to a constant, and hence we
consider this to be an analogue of Cram\'er's Theorem, at least for that case. This result, however, is valid for $\alpha<2$.

\begin{theorem}[Large Deviations ($\alpha<2)$]\label{ld.t8}In the hard truncation regime, the random variable $$S_n/\{nM_nP(\|H\|>M_n)\}$$ follows LDP with speed $nP(\|H\|>M_n)$ and rate function $\Lambda^*$, which is the Fenchel-Legendre transform (refer to Definition 2.2.2, page 26 in \cite{dembo:zeitouni:1998}) of the function $\Lambda$ given by
$$\Lambda(\lambda):=\left\{\begin{array}{ll}\int_{\bbr^d}\left(e^{\langle\lambda,x\rangle}-1\right)\nu(dx),&0<\alpha<1\,,\\\int_{\bbr^d}\left(e^{\langle\lambda,x\rangle}-1-\langle\lambda,x\rangle\right)\nu(dx),&\alpha=1\,,\\\int_{\bbr^d}\left(e^{\langle\lambda,x\rangle}-1-\langle\lambda,x\rangle\right)\nu(dx)-\frac1{\alpha-1}\int_{\mathcal S}\langle\lambda,s\rangle\sigma(ds),&1<\alpha<2\,,\end{array}\right.$$
where $\mathcal S$ and the measures $\sigma$ and $\nu$ are as defined in \eqref{defS}, \eqref{defsigma} and \eqref{ld.t2.eq1} respectively.
\end{theorem}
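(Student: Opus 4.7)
My plan is to invoke the Gärtner-Ellis theorem (Theorem 2.3.6 in \cite{dembo:zeitouni:1998}). Setting $c_n := nP(\|H\|>M_n)$ and $Z_n := S_n/(c_n M_n)$, the i.i.d.\ structure factorizes the normalized log-moment generating function as
\[
\frac{1}{c_n}\log E\exp(c_n\langle\lambda,Z_n\rangle) \;=\; \frac{1}{P(\|H\|>M_n)}\log\phi_n(\lambda), \qquad \phi_n(\lambda) := E\exp(\langle\lambda, X_{n1}/M_n\rangle).
\]
The goal reduces to showing $\phi_n(\lambda) - 1 = P(\|H\|>M_n)\bigl(\Lambda(\lambda)+o(1)\bigr)$; since $P(\|H\|>M_n)\to 0$ in the hard truncation regime (so $\phi_n(\lambda)\to 1$ by dominated convergence, as $\|X_{n1}/M_n\|\le 1+L/M_n$), the expansion $\log(1+u)\sim u$ then yields convergence of the normalized log-MGF to $\Lambda(\lambda)$, and Gärtner-Ellis produces the LDP with rate $\Lambda^*$.

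To analyze $\phi_n(\lambda)-1$, I split by the truncation event:
\[
\phi_n(\lambda)-1 \;=\; E\bigl[(e^{\langle\lambda, H/M_n\rangle}-1)\mathbf{1}(\|H\|\le M_n)\bigr] + E\bigl[(e^{(1+L/M_n)\langle\lambda, H/\|H\|\rangle}-1)\mathbf{1}(\|H\|>M_n)\bigr].
\]
For the tail piece, conditioning on $H$ and using $Ee^{\epsilon L}<\infty$ gives $E\exp(L\langle\lambda, H/\|H\|\rangle/M_n)\to 1$ uniformly in the direction (since $M_n\to\infty$), so by \eqref{ld.spectral} the tail piece equals $\int_{\mathcal S}(e^{\langle\lambda,s\rangle}-1)\sigma(ds)\cdot P(\|H\|>M_n)+o(P(\|H\|>M_n))$. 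For the bulk piece, I split $\{\|H\|\le M_n\}$ into an outer shell $\{\vep M_n<\|H\|\le M_n\}$, on which the vague convergence \eqref{new.eq1} directly produces the integral against $\mu|_{B_1}/\mu(B_1^c)$, and an inner ball $\{\|H\|\le\vep M_n\}$, whose contribution is controlled by Karamata's theorem and becomes negligible as $\vep\downarrow 0$.

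The three cases in the definition of $\Lambda$ reflect how the linear part of $e^{\langle\lambda,x\rangle}-1$ near the origin is handled, and this is the main technical obstacle. When $0<\alpha<1$, $\nu$ integrates $\|x\|$ near the origin and no centering is required. When $\alpha=1$, symmetry of $H$ forces $EX_{n1}=0$, so the correction $-\langle\lambda,x\rangle$ can be inserted inside the integrand for free, rendering $e^u-1-u=O(u^2)$ which is integrable against $\nu$. When $1<\alpha<2$, $EH=0$ gives $E[\langle\lambda,H/M_n\rangle\mathbf{1}(\|H\|\le M_n)] = -E[\langle\lambda,H/M_n\rangle\mathbf{1}(\|H\|>M_n)]$, and Karamata together with the spectral description of the tail identifies the latter asymptotically as $\tfrac{\alpha}{\alpha-1}P(\|H\|>M_n)\int_{\mathcal S}\langle\lambda,s\rangle\sigma(ds)$. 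Collecting the centered bulk, this linear correction, and the tail piece and rewriting them as an integral against $\nu = \mu|_{B_1}/\mu(B_1^c) + \sigma$ produces the stated $\Lambda$; the extra term $-\tfrac{1}{\alpha-1}\int_{\mathcal S}\langle\lambda,s\rangle\sigma(ds)$ is the residual arising from $1-\tfrac{\alpha}{\alpha-1}=-\tfrac{1}{\alpha-1}$.

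Finally, to invoke Gärtner-Ellis, $\Lambda$ must be essentially smooth. Since $\nu$ has support in the compact set $\overline{B}_1$, the integrand defining $\Lambda$ is bounded uniformly in $x$ and the integral converges absolutely for every $\lambda\in\bbr^d$; differentiating under the integral sign shows $\Lambda\in C^\infty(\bbr^d)$ and is convex. The effective domain being all of $\bbr^d$ makes the steepness condition vacuous, so $\Lambda^*$ is a good convex rate function and the full LDP follows.
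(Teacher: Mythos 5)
Your proposal is correct and follows essentially the same route as the paper: reduce to the log--moment-generating-function limit, split $\phi_n(\lambda)-1$ into a contribution from small scales (controlled by Karamata), an intermediate region (controlled by the vague convergence of $P(H/M_n\in\cdot)/P(\|H\|>M_n)$ and the spectral limit), and the over-$M_n$ piece (controlled by $Ee^{\epsilon L}<\infty$), then invoke G\"artner--Ellis with $\Lambda$ finite and differentiable on all of $\bbr^d$. The only organizational difference is that you slice by the truncation event $\{\|H\|\le M_n\}$ versus $\{\|H\|>M_n\}$ and handle the tail by conditioning on $H$, whereas the paper slices the range of $\|X_n/M_n\|$ into $[0,\epsilon)$, $[\epsilon,3]$, $(3,\infty)$ and controls $\|x\|>3$ via Cauchy--Schwarz; both are valid and your treatment of the tail is if anything a bit cleaner.
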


\begin{proof}
We start by showing that $\Lambda(\lambda)$ is well defined, that is, the integrals defining it exist. We shall show this for the case $0<\alpha<1$, the rest are similar. To that end, notice that for $A\subset\bbr^d$,
$$
\nu(A)=\int_{\mathcal S}\int_{(0,1]}\one(rs\in A)\gamma(dr)\sigma(ds)\,,
$$
where $\gamma$ is the measure on $(0,1]$ defined by
$$
\gamma(dr):=\alpha r^{-\alpha-1}dr+\delta_1(dr)\,,
$$
and $\delta_1$ denotes the measure that gives a point mass to $1$.
Thus,
\begin{eqnarray*}
\int\left|e^{\langle\lambda,x\rangle}-1\right|\nu(dx)&=&\int_{\mathcal S}\int_{(0,1]}\left|e^{r\langle\lambda,s\rangle}-1\right|\gamma(dr)\sigma(ds)\\
&\le&\|\lambda\|e^{\|\lambda\|}\int_{(0,1]}r\gamma(dr)<\infty
\end{eqnarray*}
when $0<\alpha<1$. Thus, $\Lambda(\lambda)$ is well defined in this case. Furthermore, a similar estimate will show that the partial derivatives of the integrand (in the integral defining $\Lambda(\lambda)$) with respect to $\lambda$ are integrable with respect to $\nu$. Due to sufficient smoothness of the integrand, it follows that  $\Lambda(\cdot)$ is differentiable.

Define
\begin{eqnarray*}
X_n&:=&H\one(\|H\|\le M_n)+\frac
H{\|H\|}(M_n+L)\one(\|H\|>M_n)\,.
\end{eqnarray*}
Since $\Lambda$ is  a differentiable function, using the G\"artner-Ellis theorem (Theorem
2.3.6 (page 44) in \cite{dembo:zeitouni:1998}), it suffices to show that for
all $\lambda\in{\mathbb R}^d$,
\begin{equation}\label{ld.t8.e1}
\lim_{n\rightarrow\infty}\frac1{P(\|H\|>M_n)}\log
E\exp(\langle\lambda,M_n^{-1}X_n\rangle)=\Lambda(\lambda)\,.
\end{equation}
This will be shown separately for the cases $\alpha<1$, $\alpha=1$ and $\alpha>1$.
For the first case, note that
\begin{eqnarray*}
E\exp(\langle\lambda,M_n^{-1}X_n\rangle)
&=&1+\int_{{\mathbb
R}^d\setminus\{0\}}\left(e^{\langle\lambda,x\rangle}-1\right)P(M_n^{-1}X_n\in
dx)\,.
\end{eqnarray*}
By Lemma \ref{ld.l11} and the fact that $\nu$ charges only
$\{x:0<\|x\|\le1\}$, for all $0<\epsilon<1$, it follows that
\begin{eqnarray}
&&\int_{\{\epsilon\le\|x\|\le3\}}\left(e^{\langle\lambda,x\rangle}-1\right)P(M_n^{-1}X_n\in
dx)\nonumber\\
\label{ld.t8.e4}
&\sim&
P(\|H\|>M_n)\int_{\{\|x\|\ge\epsilon\}}\left(e^{\langle\lambda,x\rangle}-1\right)\nu(dx)\,.
\end{eqnarray}
For $\alpha<1$ , $e^{\langle\lambda,x\rangle}-1$ is
$\nu$-integrable and hence,
$$\lim_{\epsilon\downarrow0}\int_{\{\|x\|\ge\epsilon\}}\left(e^{\langle\lambda,x\rangle}-1\right)\nu(dx)=\int\left(e^{\langle\lambda,x\rangle}-1\right)\nu(dx)\,.$$
Also,
\begin{eqnarray*}
&&\frac1{P(\|H\|>M_n)}\int_{\{\|x\|>3\}}\left|e^{\langle\lambda,x\rangle}-1\right|P(M_n^{-1}X_n\in dx)\\
&\le&\frac1{P(\|H\|>M_n)}E\left[\exp\left(\langle\lambda,M_n^{-1}X_n\rangle\right)\one(\|M_n^{-1}X_n\|>3)\right]\\
&&+P(L>2M_n)\,.
\end{eqnarray*}
By the Cauchy-Schwartz inequality,
\begin{eqnarray*}
&&\frac1{P(\|H\|>M_n)}E\left[\exp\left(\langle\lambda,M_n^{-1}X_n\rangle\right)\one(\|M_n^{-1}X_n\|>3)\right]\\
&\le&\left[E\exp\left(2M_n^{-1}\|\lambda\|\|X_n\|\right)\right]^{1/2}\frac{P(\|X_n\|>3M_n)^{1/2}}{P(\|H\|>M_n)}\,.
\end{eqnarray*}
Choose $n$ large
enough so that $M_n>\max(1,2\|\lambda\|/\epsilon)$ where $\epsilon$ is
such that $Ee^{\epsilon L}<\infty$. Also, observe that
$$M_n^{-1}\|X_n\|\le(2+M_n^{-1}L)\,.$$
Thus,
$$E\exp\left(2M_n^{-1}\|\lambda\|\|X_n\|\right)\le\exp(4\|\lambda\|)Ee^{\epsilon L}<\infty\,,$$
while,
$$\frac{P(\|X_n\|>3M_n)^{1/2}}{P(\|H\|>M_n)}=\frac{P(L>2M_n)^{1/2}}{P(\|H\|>M_n)^{1/2}}\le\frac{e^{-\epsilon M_n}}{P(\|H\|>M_n)^{1/2}}Ee^{\epsilon L/2}$$
$$\longrightarrow0\,.$$
This shows
\begin{equation}\label{ld.t8.e2}
\lim_{n\longrightarrow\infty}\frac1{P(\|H\|>M_n)}\int_{\{\|x\|>3\}}\left|e^{\langle\lambda,x\rangle}-1\right|P(M_n^{-1}X_n\in
dx)=0\,.
\end{equation}
By Karamata's theorem and the fact that
$e^{\langle\lambda,x\rangle}=1+O(\|x\|)$, one can show that there is $C<\infty$ so that,
$$
\limsup_{n\to\infty}\frac1{P(\|H\|>M_n)}\int_{\{\|x\|<\epsilon\}}\left|e^{\langle\lambda,x\rangle}-1\right|P(M_n^{-1}X_n\in
dx)\le C\epsilon^{1-\alpha}\,,
$$
thus proving that
\begin{equation}\label{ld.t8.e3}
\lim_{\epsilon\downarrow0}\limsup_{n\longrightarrow\infty}
\frac1{P(\|H\|>M_n)}\int_{\{\|x\|<\epsilon\}}\left|e^{\langle\lambda,x\rangle}-1\right|P(M_n^{-1}X_n\in
dx)=0\,.
\end{equation}
Clearly, (\ref{ld.t8.e4}), (\ref{ld.t8.e2}) and (\ref{ld.t8.e3})
show (\ref{ld.t8.e1}) and hence complete the proof for the case $\alpha<1$.

For the case $\alpha=1$, by the fact that when $\alpha=1$, $H$ (and hence $X_n$) has a symmetric distribution it follows that
\begin{eqnarray*}
E\exp(\langle\lambda,M_n^{-1}X_n\rangle)
&=&1+\int_{{\mathbb
R}^d\setminus\{0\}}\left(e^{\langle\lambda,x\rangle}-1-\langle\lambda,x\rangle\right)P(M_n^{-1}X_n\in
dx)\,.
\end{eqnarray*}
Note that $\alpha=1$ implies that $e^{\langle\lambda,x\rangle}-1-\langle\lambda,x\rangle$ is $\nu$-integrable.
By arguments similar to those for the case $\alpha<1$, it follows that as $n\longrightarrow\infty$,
\begin{eqnarray}
&&\int_{{\mathbb
R}^d\setminus\{0\}}\left(e^{\langle\lambda,x\rangle}-1-\langle\lambda,x\rangle\right)P(M_n^{-1}X_n\in
dx)\nonumber\\
\label{ld.t8.e7}
&\sim& P(\|H\|>M_n)\int\left(e^{\langle\lambda,x\rangle}-1-\langle\lambda,x\rangle\right)\nu(dx)\,.
\end{eqnarray}
This completes the proof for the case $\alpha=1$.

For the case $1<\alpha<2$, note that
$$
E\exp(\langle\lambda,M_n^{-1}X_n\rangle)
$$
$$
=1+\int_{{\mathbb
R}^d\setminus\{0\}}\left(e^{\langle\lambda,x\rangle}-1-\langle\lambda,x\rangle\right)P(M_n^{-1}X_n\in
dx)+\int\langle\lambda,x\rangle P(M_n^{-1}X_n\in dx)\,.
$$
For this case also, $e^{\langle\lambda,x\rangle}-1-\langle\lambda,x\rangle$ is clearly $\nu$-integrable, and similar arguments as those for the case $\alpha<1$ show \eqref{ld.t8.e7}.
Thus, all that needs to be shown is as $n\longrightarrow\infty$,
\begin{equation}\label{ld.t8.e6}
\int\langle\lambda,x\rangle P(M_n^{-1}X_n\in dx)\sim-\frac1{\alpha-1}P(\|H\|>M_n)\int_{\mathcal S}\langle\lambda,s\rangle\sigma(ds)\,.
\end{equation}
For this, note that
\begin{eqnarray*}
&&\int\langle\lambda,x\rangle P(M_n^{-1}X_n\in dx)\\
&=&\int_{\{\|x\|\le M_n\}}\langle\lambda,x\rangle P(M_n^{-1}H\in dx)\\
&&\,\,\,\,+\left(1+M_n^{-1}E(L)\right)\int_{\mathcal S}\langle\lambda,s\rangle P\left(\frac H{\|H\|}\in ds,\|H\|>M_n\right)\\
&=:&I_1+I_2\,.
\end{eqnarray*}
By the assumption that $EH=0$, it follows that
\begin{eqnarray*}
I_1&=&-\int_{\{\|x\|>M_n\}}\langle\lambda,x\rangle P(M_n^{-1}H\in dx)\\
&=&-M_n^{-1}\int_{M_n}^\infty\int_{\mathcal S}\langle\lambda,s\rangle rP\left(\frac H{\|H\|}\in ds,\|H\|\in dr\right)\\
&\sim&-P(\|H\|>M_n)\frac\alpha{\alpha-1}\int_{\mathcal S}\langle\lambda,s\rangle\sigma(ds)\,,
\end{eqnarray*}
the equivalence in the last line following by a result similar to Lemma 2.1 in \cite{chakrabarty:samorodnitsky:2009}. Notice that by \eqref{ld.spectral},
\begin{eqnarray*}
I_2&\sim&\int_{\mathcal S}\langle\lambda,s\rangle P\left(\frac H{\|H\|}\in ds,\|H\|>M_n\right)\\
&\sim&P(\|H\|>M_n)\int_{\mathcal S}\langle\lambda,s\rangle\sigma(ds)
\end{eqnarray*}
This shows \eqref{ld.t8.e6} and thus completes the proof.
\end{proof}

Similar calculations as above, for the case $\alpha\ge2$, will show that\\ $S_n/(nM_n^{-1})$ follows LDP with speed $nM_n^{-2}$ and rate function that is the Fenchel-Legendre transform of $\frac12\langle\lambda,D\lambda\rangle$, $D$ being the dispersion matrix of $H$. This is, however, covered in much more generality in Theorem \ref{ld.t9} below, and hence we chose not to include this case in Theorem \ref{ld.t8}.

Cram\'er's Theorem deals with $n^{-1}\sum_{i=1}^nZ_i$ where
$Z_1,Z_2,\ldots$ are i.i.d. random variables. On a finer scale,
$n^{-1/2}\sum_{i=1}^n[Z_i-E(Z_i)]$ possesses a limiting Normal
distribution by the central limit theorem. For $\beta\in(1/2,1)$,
the renormalized quantity $n^{-\beta}\sum_{i=1}^n[Z_i-E(Z_i)]$
satisfies an LDP but always with a quadratic rate function. The
precise statement for this is  known as
moderate deviations; see Theorem 3.7.1 in
\cite{dembo:zeitouni:1998}.
The last result of this section is an analogue of the above result, in the setting of truncated heavy-tailed random variables.

\begin{theorem}[Moderate Deviations]\label{ld.t9}
Suppose that we are in the hard truncation regime, and the sequence $c_n$ satisfies
\begin{equation}\label{hypo1}
n^{1/2}M_nP(\|H\|>M_n)^{1/2}\ll c_n\ll nM_nP(\|H\|>M_n),\mbox{ if }\alpha<2\,,
\end{equation}
\begin{equation}\label{hypo2}
n^{1/2}\ll c_n\ll\frac n{M_n^3P(\|H\|>M_n)},\mbox{ if }2\le\alpha<3\,,
\end{equation}
\begin{equation}\label{hypo3}
n^{1/2}\ll c_n\ll nM_n^{-\delta}\mbox{ for some }\delta>0,\mbox{ if }\alpha=3\,,
\end{equation}
and
\begin{equation}\label{hypo4}
n^{1/2}\ll c_n\ll n,\mbox{ if }\alpha>3\,.
\end{equation}
Then,
$c_n^{-1}(S_n-ES_n)$ follows LDP with speed $\beta_n$ and rate
$\Lambda^*$, the Fenchel-Legendre transform of $\Lambda$, where
$$
\beta_n:=\left\{\begin{array}{ll}\frac{c_n^2}{nM_n^2P(\|H\|>M_n)},&\mbox{if }\alpha<2\,,\\\frac{c_n^2}n,&\mbox{if }\alpha\ge2\,,\end{array}\right.
$$
and
$$
\Lambda(\lambda):=\frac12\langle\lambda,D\lambda\rangle\,.$$ Here,
$D$ is the $d\times d$ matrix with
$$D_{ij}:=\frac2{2-\alpha}\int_{\mathcal S}s_is_j\sigma(ds)$$
if $\alpha<2$ and the dispersion matrix of $H$ if $\alpha\ge2$, which is well defined even when $\alpha=2$ because it has been assumed in that case, that $E\|H\|^2<\infty$.
If, in addition, $D$ is invertible, then $\Lambda^*$ is given by
$$\Lambda^*(x)=\frac12\langle x,D^{-1}x\rangle\,.$$
\end{theorem}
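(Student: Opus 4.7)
The plan is to apply the G\"artner--Ellis theorem (Theorem 2.3.6 of \cite{dembo:zeitouni:1998}), exactly as in the proof of Theorem \ref{ld.t8}. Since $\Lambda(\lambda)=\tfrac12\langle\lambda,D\lambda\rangle$ is everywhere finite and smooth, it suffices to establish, for every $\lambda\in\bbr^d$, the pointwise limit
\begin{equation*}
\lim_{n\to\infty}\beta_n^{-1}\log E\exp\!\Bigl(\tfrac{\beta_n}{c_n}\langle\lambda,S_n-ES_n\rangle\Bigr)=\tfrac12\langle\lambda,D\lambda\rangle\,.
\end{equation*}
By the i.i.d.\ structure of the row $X_{n1},\ldots,X_{nn}$, with $u_n:=\beta_n/c_n$ and $Y_n:=\langle\lambda,X_{n1}-EX_{n1}\rangle$ (so that $EY_n=0$), this further reduces to showing $(n/\beta_n)\log Ee^{u_nY_n}\to\tfrac12\langle\lambda,D\lambda\rangle$. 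Once this is in hand, the closed-form $\Lambda^{*}(x)=\tfrac12\langle x,D^{-1}x\rangle$ when $D$ is invertible follows from a routine Fenchel--Legendre computation on a positive-definite quadratic form.

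Taylor-expanding gives $\log Ee^{u_n Y_n}=\tfrac{u_n^2}{2}\Var(Y_n)+R_n$, so the leading term to analyze is $\tfrac{n\beta_n}{2c_n^2}\Var(\langle\lambda,X_{n1}\rangle)$, with prefactor $n\beta_n/c_n^2$ equal to $1$ when $\alpha\ge2$ and to $1/(M_n^2P(\|H\|>M_n))$ when $\alpha<2$. For $\alpha\ge2$, the assumption $E\|H\|^2<\infty$ together with $Ee^{\epsilon L}<\infty$ and dominated convergence gives $\Var(\langle\lambda,X_{n1}\rangle)\to\Var(\langle\lambda,H\rangle)=\langle\lambda,D\lambda\rangle$. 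For $\alpha<2$, the key step is the decomposition
$$
E\langle\lambda,X_{n1}\rangle^2=E\bigl[\langle\lambda,H\rangle^2\one(\|H\|\le M_n)\bigr]+E\bigl[(M_n+L)^2\bigl\langle\lambda,\tfrac{H}{\|H\|}\bigr\rangle^2\one(\|H\|>M_n)\bigr];
$$
Karamata's theorem combined with the spectral convergence \eqref{ld.spectral} produces a bulk contribution asymptotic to $\tfrac{\alpha}{2-\alpha}M_n^2P(\|H\|>M_n)\int_{\mathcal S}\langle\lambda,s\rangle^2\sigma(ds)$ and a boundary contribution asymptotic to $M_n^2P(\|H\|>M_n)\int_{\mathcal S}\langle\lambda,s\rangle^2\sigma(ds)$, with $L$-dependent cross terms negligible thanks to its exponential moment. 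Their sum recovers the factor $\tfrac{2}{2-\alpha}$ and hence $\langle\lambda,D\lambda\rangle$; the mean correction $(E\langle\lambda,X_{n1}\rangle)^2$ is of strictly smaller order in every regime.

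For the remainder I would use the inequality $|\log Ee^{uY}-\tfrac{u^2}{2}\Var(Y)|\le C u^3 E[|Y|^3 e^{u|Y|}]$, controlling the exponential factor by splitting $\{|Y_n|\le A/u_n\}$ versus its complement and invoking $Ee^{\epsilon L}<\infty$ on the tail. Absolute third moments satisfy $E|Y_n|^3=O(M_n^3P(\|H\|>M_n))$ for $\alpha<3$ (Karamata), $o(M_n^\eta)$ for every $\eta>0$ when $\alpha=3$, and $O(1)$ for $\alpha>3$; a direct accounting then shows that each upper bound in \eqref{hypo1}--\eqref{hypo4} is precisely what is needed to force $nR_n/\beta_n\to0$: the error is $O(c_n/(nM_nP(\|H\|>M_n)))$ for $\alpha<2$, $O(c_nM_n^3P(\|H\|>M_n)/n)$ for $2\le\alpha<3$, $O(c_nM_n^\eta/n)$ for arbitrarily small $\eta>0$ when $\alpha=3$, and $O(c_n/n)$ for $\alpha>3$. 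I expect the main obstacle to be the variance asymptotics in the $\alpha<2$ case, where the delicate cancellation $\tfrac{\alpha}{2-\alpha}+1=\tfrac{2}{2-\alpha}$ between bulk and boundary contributions is exactly what produces the stated constant in $D$, plus the book-keeping required to translate the four separate upper-bound regimes on $c_n$ into a uniform statement that the cubic cumulant remainder is negligible.
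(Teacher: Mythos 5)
Your proposal is correct and follows essentially the same route as the paper: both apply the G\"artner--Ellis theorem after a third-order Taylor expansion of the cumulant generating function of a single summand, with the variance term giving $\tfrac12\langle\lambda,D\lambda\rangle$ and the cubic remainder killed by Karamata estimates on $E\|X_{n1}\|^3$ together with the exponential moment of $L$, the four hypotheses on $c_n$ being exactly what makes the remainder $o(\beta_n/n)$ in each $\alpha$-regime. The only cosmetic differences are that you expand $\log E e^{u_n Y_n}$ rather than $E e^{u_n Y_n}$, and you spell out the $\alpha<2$ variance asymptotics via the bulk/boundary split $\tfrac{\alpha}{2-\alpha}+1=\tfrac{2}{2-\alpha}$, whereas the paper imports that computation by citation from Chakrabarty--Samorodnitsky (2009).
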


Before proceeding to prove the result, we point out that it is
never vacuous, that is, a sequence $(c_n)$ satisfying the
hypotheses always exists. The existence of a sequence $(c_n)$
satisfying \eqref{hypo1} and \eqref{hypo4} is immediate. Existence
of $(c_n)$ satisfying \eqref{hypo2}  will be clear provided it can
be shown that, if $\alpha\ge2$, then
\begin{equation}\label{ld.t9.new}
n^{1/2}\ll\frac n{M_n^3P(\|H\|>M_n)}\,.
\end{equation}
If $\alpha=2$, then by \eqref{hard}, it follows that
$$
n^{-1/2}M_n^3P(\|H\|>M_n)=o\left(M_n^3P(\|H\|>M_n)^{3/2}\right)=o(1)\,,
$$
the second equality being true because $P(\|H\|>x)=O(x^{-2})$, which is a consequence of the assumption that $E\|H\|^2<\infty$. This shows \eqref{ld.t9.new} when $\alpha=2$. When $\alpha>2$, \eqref{ld.t9.new} will follow because now
\begin{eqnarray*}
M_n^3P(\|H\|>M_n)^{3/2}=o(1)\,.
\end{eqnarray*}
For ensuring the existence of $(c_n)$ satisfying \eqref{hypo3}, observe that for $\delta<\alpha/2$, it holds that
$$
n^{1/2}M_n^{-\delta}\gg n^{1/2}P(\|H\|>M_n)^{1/2}\gg1\,.
$$

\begin{proof}[Proof of Theorem \ref{ld.t9}]

It is easy to see that $\beta_n\longrightarrow\infty$ as $n\longrightarrow\infty$. Thus, in view of the G\"artner-Ellis Theorem, it suffices to show that for all $\lambda\in\bbr^d$,
\begin{equation}\label{ld.t9.eq2}
\lim_{n\to\infty}\beta_n^{-1}\log E\exp\left(\langle\lambda,(M_nb_n)^{-1}(S_n-ES_n)\rangle\right)=\frac12\langle\lambda,D\lambda\rangle\,,
\end{equation}
where
$$b_n:=\left\{\begin{array}{ll}nM_nP(\|H\|>M_n)/c_n,&\alpha<2\\n/(c_nM_n),&\alpha\ge2\,.\end{array}\right.$$
Notice that if $\alpha<3$, then we have that
$$
\frac n{M_n^3P(\|H\|>M_n)}\ll n\,.
$$
By \eqref{hypo2}, \eqref{hypo3} and \eqref{hypo4}, it follows that for all $\alpha\ge2$,
$$
c_n\ll n\,.
$$
Consequently,
\begin{equation}\label{new1}
b_n\gg M_n^{-1}\mbox{ if }\alpha\ge2\,.
\end{equation}
By \eqref{hypo1}, it follows that
\begin{equation}\label{new2}
b_n\gg1\mbox{ if }\alpha<2\,.
\end{equation}
Define
\begin{eqnarray*}
X_n&:=&H\one(\|H\|\le M_n)+\frac
H{\|H\|}(M_n+L)\one(\|H\|>M_n)\,.
\end{eqnarray*}
Let $\xi_n$ be defined by
$$\exp(\langle\lambda,(b_nM_n)^{-1}(X_n-EX_n)\rangle)$$
$$=1+(b_nM_n)^{-1}\langle\lambda,X_n-EX_n\rangle+\frac12(b_nM_n)^{-2}\langle\lambda,(X_n-EX_n)(X_n-EX_n)^T\lambda\rangle+\xi_n\,.$$
Our next claim is that
\begin{eqnarray}
E\exp(\langle\lambda,(b_nM_n)^{-1}(X_n-EX_n)\rangle)
&=&1+\frac12(b_nM_n)^{-2}\langle\lambda,{\mathcal D}(X_n)\lambda\rangle+E\xi_n\nonumber\\
&=&1+\frac12\gamma_n\langle\lambda,D\lambda\rangle(1+o(1))+E\xi_n\,,\label{ld.t9.eq4}
\end{eqnarray}
where
$$\gamma_n:=\left\{\begin{array}{ll}b_n^{-2}P(\|H\|>M_n),&\alpha<2\\b_n^{-2}M_n^{-2},&\alpha\ge2\,.\end{array}\right.$$
Note that \eqref{ld.t9.eq4} follows trivially for the case $\alpha\ge2$. For the case $\alpha<2$, in the proof of Theorem 2.2 of \cite{chakrabarty:samorodnitsky:2009}, it has been shown that as $n\longrightarrow\infty$,
$$\Var(\langle\lambda,X_n\rangle)\sim M_n^2P(\|H\|>M_n)\frac2{2-\alpha}\int_{\mathcal S}\langle\lambda,s\rangle^2\sigma(ds)\,,$$
which essentially means \eqref{ld.t9.eq4}.

Clearly, $n\gamma_n=\beta_n$, and by \eqref{new1} and \eqref{new2}, it follows that
$$
\lim_{n\to\infty}\gamma_n=0\,.
$$
Hence all that needs to be shown for \eqref{ld.t9.eq2} is $E\xi_n=o(\gamma_n)$ as $n\longrightarrow\infty$. By Taylor's Theorem, there exists $C<\infty$ so that
\begin{eqnarray*}
|\xi_n|&\le&C(b_nM_n)^{-3}\|X_n-EX_n\|^3\exp\left\{C(b_nM_n)^{-1}\|X_n-EX_n\|\right\}\\
&\le&C(b_nM_n)^{-3}\|X_n-EX_n\|^3\exp\left\{Cb_n^{-1}\left(4+\frac{L+E(L)}{M_n}\right)\right\}\\
&\le&8C(b_nM_n)^{-3}\left(\|X_n\|^3+\|EX_n\|^3\right)\exp\left\{Cb_n^{-1}\left(4+\frac{L+E(L)}{M_n}\right)\right\}\,.
\end{eqnarray*}
Thus,
$$E|\xi_n|=O\left((b_nM_n)^{-3}E\left[\left(\|X_n\|^3+\|EX_n\|^3\right)\exp(CL/b_nM_n)\right]\right)\,.$$
Note that
\begin{eqnarray*}
&&E\left[\|X_n\|^3\exp(CL/b_nM_n)\right]\\
&=&E\left[\|H\|^3\one(\|H\|\le M_n)\right]E\left[\exp(CL/b_nM_n)\right]\\
&&+P(\|H\|>M_n)E\left[(M_n+L)^3\exp(CL/b_nM_n)\right]\\
&=&O(1)E\left[\|H\|^3\one(\|H\|\le M_n)\right]+O\left(M_n^3P(\|H\|>M_n)\right)\,.
\end{eqnarray*}
Also,
\begin{eqnarray*}
&&\|EX_n\|^3E\left[\exp(CL/b_nM_n)\right]\\
&=&O(E(\|X_n\|^3))\\
&=&O\left(E\left[\|H\|^3\one(\|H\|\le M_n)\right]+M_n^3P(\|H\|>M_n)\right)\,,
\end{eqnarray*}
the last step following by similar calculations as above. Thus,
$$E\xi_n=$$
\begin{equation}\label{ld.t9.eq3}
O\left\{(b_nM_n)^{-3}\left(E\left[\|H\|^3\one(\|H\|\le M_n)\right]+M_n^3P(\|H\|>M_n)\right)\right\}\,.
\end{equation}
We claim that for all $\alpha$,
\begin{equation}\label{ld.t9.eq1}
P(\|H\|>M_n)=o(b_n^3\gamma_n)\,.
\end{equation}
This is immediate by \eqref{new2} if $\alpha<2$, and by \eqref{hypo2} if $2\le\alpha<3$. When $\alpha=3$,
\begin{eqnarray*}
P(\|H\|>M_n)\ll M_n^{-3+\delta}
\ll \frac n{c_n}M_n^{-3}
=b_n^3\gamma_n\,,
\end{eqnarray*}
the second inequality following from \eqref{hypo3}. Thus, \eqref{ld.t9.eq1} holds when $\alpha=3$.
For the case $\alpha>3$, \eqref{hypo4} implies \eqref{ld.t9.eq1}.

If $\alpha<3$, then by Karamata's Theorem,
$$E\left[\|H\|^3\one(\|H\|\le M_n)\right]=O(M_n^3P(\|H\|>M_n))\,.$$
Hence by \eqref{ld.t9.eq3} and \eqref{ld.t9.eq1}, it follows that
 $E\xi_n=o(\gamma_n)$ for the case $\alpha<3$. If $\alpha=3$, then
$$E\left[\|H\|^3\one(\|H\|\le M_n)\right]=o(M_n^\delta)=o(b_n^3M_n^3\gamma_n)\,.$$
Using \eqref{ld.t9.eq3} and \eqref{ld.t9.eq1}, this shows that $E\xi_n=o(\gamma_n)$ for the case $\alpha=3$. When $\alpha>3$,
$$E\left[\|H\|^3\one(\|H\|\le M_n)\right]=O(1)=o(b_n^3M_n^3\gamma_n)\,,$$ and this completes the proof.
\end{proof}

\section{Conclusions}\label{sec:con} The proofs of the results in Section \ref{sec:ld.st} make it clear that in the soft truncation regime, the idea leading to the investigation of the large deviation behavior is similar to that in the case of untruncated heavy-tailed distributions, as studied in \cite{hult:lindskog:mikosch:samorodnitsky:2005}, for example. The argument in the untruncated case is based on showing that the partial sum is large ``if and only if'' exactly one of the summands is large, while in the softly truncated case, it was showed that the partial sum is large ``if and only if'' the sum of a fixed number of them is large. The similarity between the two situations is clear. The results of Section \ref{sec:ld.ht} show that the large deviation analysis in the case where the tails are truncated hard follow the same route as that for i.i.d. random variables with exponentially light tails, namely the G\"artner-Ellis Theorem. Thus, the analysis carried out in this paper provides the following answer to the question posed in Section \ref{sec:intro}: when the growth rate of the truncating threshold is fast enough so that the model is in the soft truncation regime, the effect of truncating by that is negligible, whereas when the same is slow enough so that the model is in the hard truncation regime, the effect is significant to the point that the model then behaves like a light-tailed one.

\section{Acknowledgements} The author is immensely grateful to his adviser Gennady Samorodnitsky for some helpful discussions. He also acknowledges the comments and suggestions of two anonymous referees and an Associate Editor, which helped improve the presentation significantly.


\end{document}